\newtheorem{theorem}{Theorem}
\newtheorem{corollary}[theorem]{Corollary}
\newtheorem{lemma}[theorem]{Lemma}
\newtheorem{proposition}[theorem]{Proposition}
\theoremstyle{definition}
\newtheorem{definition}[theorem]{Definition}
\newtheorem{example}{Example}
\newtheorem*{remark*}{Remark}
\theoremstyle{plain}
\newtheorem{assumption}{Assumption}
\numberwithin{equation}{section}
\numberwithin{theorem}{section}
\numberwithin{algorithm}{section}
\newcommand\bbR{\mathbb{R}}
\newcommand\bbC{\mathbb{C}}
\newcommand\raq{\mathcal{R}}
\newcommand\diag{\mathrm{diag}}
\renewcommand{\vec}[1]{\bm{#1}}
\newcommand{\norm}[1]{\left\lVert#1\right\rVert}
\newcommand{\abs}[1]{\left\lvert#1\right\rvert}
\DeclareMathOperator{\spread}{spread}
\DeclareMathOperator{\real}{Re}
\DeclareMathOperator{\imag}{Im}
\newcommand{\removespace}{}
\newenvironment{keywords}[1]{\begin{quotation}\footnotesize\textbf{Key words.}\ #1}{\end{quotation}}
\newenvironment{MSCcodes}[1]{\begin{quotation}\footnotesize\textbf{MSC codes.}\ #1}{\end{quotation}}
\let\@fnsymbol\@arabic%
\title{A complex-projected Rayleigh quotient iteration for targeting interior eigenvalues}
\author{Nils Friess\footnotemark[1]
	\and%
	Alexander D.~Gilbert\footnotemark[2]
  \and%
  Robert Scheichl\footnotemark[1]
}
\date{}
\begin{document}
\maketitle%

\footnotetext[1]{Institute for Mathematics \& Interdisciplinary Centre for Scientific Computing, Universität Heidelberg, 69120 Heidelberg, Germany.
\texttt{nils.friess@iwr.uni-heidelberg.de}, \texttt{r.scheichl@uni-heidelberg.de}}
\footnotetext[2]{School of Mathematics and Statistics, University of New South Wales, Sydney NSW 2052, Australia.\\
\texttt{alexander.gilbert@unsw.edu.au}}

\begin{abstract}
We introduce a new \emph{Projected Rayleigh Quotient Iteration} aimed at improving the convergence behaviour of classic Rayleigh Quotient iteration (RQI) by incorporating approximate information about the target eigenvector at each step. While classic RQI exhibits local cubic convergence for Hermitian matrices, its global behaviour can be unpredictable, whereby it may converge to an eigenvalue far away from the target, even when started with accurate initial conditions. This problem is exacerbated when the eigenvalues are closely spaced. The key idea of the new algorithm is at each step to add a complex-valued projection to the original matrix (that depends on the current eigenvector approximation), such that the unwanted eigenvalues are lifted into the complex plane while the target stays close to the real line, thereby increasing the spacing between the target eigenvalue and the rest of the spectrum. Making better use of the eigenvector approximation leads to more robust convergence behaviour and the new method converges reliably to the correct target eigenpair for a significantly wider range of initial vectors than does classic RQI. We prove that the method converges locally cubically and we present several numerical examples demonstrating the improved global convergence behaviour. In particular, we apply it to compute eigenvalues in a band-gap spectrum of a Sturm-Liouville operator used to model photonic crystal fibres, where the target and unwanted eigenvalues are closely spaced. The examples show that the new method converges to the desired eigenpair even when the eigenvalue spacing is very small, often succeeding when classic RQI fails.
\end{abstract}

\begin{keywords}
eigenvalues, Rayleigh quotient iteration, complex shifts, eigenvector information, localised eigenvectors, larger convergence radius 
\end{keywords}

\begin{MSCcodes}
65F15, 65L15, 65N25
\end{MSCcodes}

\section{Introduction}
Rayleigh Quotient Iteration (RQI) is a simple yet effective algorithm to compute a single eigenvalue and eigenvector pair (eigenpair) of a matrix. For Hermitian matrices, RQI converges locally cubically and converges globally for almost all starting vectors~\cite{Parlett1974}, but it suffers from a significant flaw, whereby the global convergence can be unpredictable. It is well-known that RQI may converge to the wrong eigenpair, i.e., one that is different to the target eigenpair, even when executed with a good approximation of the target eigenvector as the initial vector and an initial shift close to the target eigenvalue.
This problem is exacerbated in cases where the distance between the wanted eigenvalue and its neighbours is small (cf.\ the numerical examples in Section~\ref{sec:numerics}).

In this paper, we propose a new algorithm that modifies RQI to overcome this problem by exploiting any prior information known about the target eigenvector. The idea of the new method is as follows. Starting with an approximation of the target eigenvector, we perturb the original problem by adding a purely complex 
projection matrix constructed using this approximation, 
such that the unwanted eigenvalues are lifted into the complex plane, while the target eigenvalue remains close to the real line (see Figure~\ref{fig:perturbed_spectrum}), thereby artificially increasing the gap between the target eigenvalue and its neighbours. We then use the perturbed matrix in RQI, where in each step of the iteration we update the perturbation matrix using the previous eigenvector approximation
and gradually reduce the size of the perturbation.

\begin{figure}[t]
  \centering
  \includegraphics[width=0.8\textwidth]{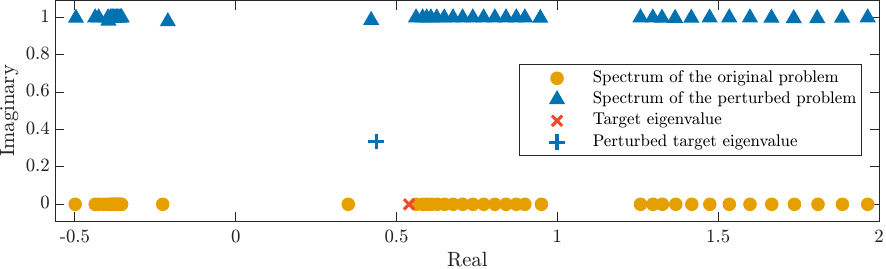}
  \caption{{Section of the spectrum of a matrix $A$ (orange circles) and the perturbed matrix $\widetilde{A}$ (blue triangles). Most of the perturbed eigenvalues $\widetilde{\lambda}$ have $\text{Im}({\widetilde{\lambda}}) \approx 1$ while one eigenvalue lies close to the real line. The matrix $A$ and the vector $\vec{u}$ that were used to construct $\widetilde{A}$ for this plot come from Example~\ref{ex:sturm} in Section~\ref{sec:numerics}.}}%
  \label{fig:perturbed_spectrum}
\end{figure}

More precisely, suppose we wish to compute an eigenpair $(\lambda, \vec{v}) \in \bbR \times \bbC^n$, $\vec{v} \neq \vec{0}$, of a Hermitian matrix $A \in \bbC^{n \times n}$ and we are given an approximation $\vec{u} \approx \vec{v}$, then we perturb the matrix $A$ by
\begin{equation}
  \label{eq:orig_form}
  A \longrightarrow A + i \gamma(I - \vec{u}\vec{u}^\ast)\,,
\end{equation}
where $i$ denotes the imaginary unit, the asterisk denotes the complex conjugate transpose and $I - \vec{u}\vec{u}^\ast$
is the projection onto the orthogonal complement of $\vec{u}$. The real scalar $\gamma > 0$ is used to control the magnitude of the perturbation. 

If $\vec{u}$ is a sufficiently good approximation of $\vec{v}$, then the perturbed target eigenvalue~$\widetilde{\lambda}$ stays near the real line while the remaining eigenvalues, $\mu \neq \lambda$, are perturbed by
\begin{equation*}
  \mu \longrightarrow \mu + i \gamma\,,
\end{equation*}
approximately. Since $\mu \in \bbR$, because $A$ is Hermitian, the perturbed eigenvalues now lie (approximately) along the line $i \gamma$ in the upper half of the complex plane, again see Figure~\ref{fig:perturbed_spectrum}, and so the distance between the perturbed target eigenvalue $\widetilde{\lambda}$ and the remainder of the perturbed spectrum is approximately $\gamma$.
Applying RQI to this problem, we will not compute the target eigenvalue but rather an eigenvalue of the perturbed matrix. Therefore, we replace $\gamma$ by a non-negative decreasing zero sequence ${(\gamma^{(k)})}_{k \ge 0} \subset \bbR$ such that the perturbation becomes increasingly small as $k \rightarrow \infty$ and we recover the original unperturbed problem. 

{In practice, the size of the perturbation $\gamma^{(k)}$ needs to be balanced. If it decays too
fast then we lose the effect of the complex perturbation and our method may not converge.
On the other hand, if it decays too slowly then it will slow down the convergence
to the target eigenpair.
A good balance is to choose $\gamma^{(k)}$ at each step 
to be the norm (or norm squared) of the current residual.
In this way, our algorithm utilises approximate eigenvector information to choose both 
the direction as well as the magnitude of the complex perturbation.
We prove that this choice leads to an asymptotic rate of convergence that is either quadratic or cubic, depending on the choice of the perturbations $\gamma^{(k)}$.}

The motivation comes from practical applications
where the eigenvalues are closely spaced but
some information about the target eigenvector is known.
For example, in models for photonic crystal fibres \cite{Kuchment2001} the spectrum has a band-gap structure and the eigenvalues that lie inside the gaps are very closely 
spaced; the corresponding eigenvectors are oscillatory and localised. 
As we show later in Section \ref{sec:numerics}, this limited, qualitative information about the target eigenvector
suffices to ensure that our method converges to it.
The method we propose was inspired by the approach in \cite{Marletta2012}, which works directly on the differential equation level. See also the recent article \cite{ovall2023}, which expands on that paper. Alternative numerical methods for computing localised eigenfunctions of differential operators are presented in~\cite{altmann2019,arnold2019,altmann2020}. In contrast to all those methods, our approach is purely algebraic, can be applied to any Hermitian matrix and requires only a one-line change in a classic RQI algorithm.

There are other approaches that modify RQI in order to make the convergence behaviour of the algorithm more predictable, see~\cite{pantazisszyld,szyld,beattiefox}. However, all those methods usually use \emph{a priori} information about the eigenvalues, e.g., the number of eigenvalues within a certain interval. Our approach is novel in that we exploit information about the eigenvectors instead.

The remainder of the paper is organised as follows. In the next section, we review convergence results for Inverse Iteration and classic RQI, since they form the basis for the convergence analysis of our method. In Section 3, we then introduce the new Projected Rayleigh Quotient Iteration (PRQI) method. We also prove there that the approach introduced above {in \eqref{eq:orig_form}} is equivalent to perturbing the matrix according to
\begin{equation*}
  A \longrightarrow A + i \gamma I\,.
\end{equation*}
That is, it is not necessary to perturb $A$ by the full projection matrix $i \gamma(I - \vec{u}\vec{u}^\ast)$ {as in~\eqref{eq:orig_form}} but it suffices to shift only the diagonal entries by $i \gamma$. This simplifies the implementation and also allows us to prove asymptotic convergence of our method in Section 3.2. Finally, in Section~\ref{sec:numerics} we present three numerical examples that demonstrate the efficacy of our method,
including a practical example from an 
application in photonics. 
In particular, we present results where our algorithm succeeds in locating the target eigenvalue within a closely-spaced spectrum.
Often a crude approximation is sufficient
for our method to succeed in converging to the target eigenpair,
while for the same initial 
conditions classic RQI fails.

\section{Classic Rayleigh Quotient Iteration}
We consider the Hermitian eigenvalue problem: find $(\lambda, \vec{v}) \in \bbR \times \bbC^n$, $\vec{v} \neq \vec{0}$, such that
\begin{equation}
  \label{eq:eigvalproblem}
  A\vec{v} = \lambda \vec{v}\,,
\end{equation}
where $A = A^\ast \in \bbC^{n \times n}$ is a complex Hermitian matrix. We denote the eigenvalues of $A$ by $\lambda_1, \dotsc, \lambda_n$ with corresponding eigenvectors $\vec{v}_1, \dotsc, \vec{v}_n$ which we assume to be normalised w.r.t.\ the Euclidean norm $\norm{\cdot} = \norm{\cdot}_2$. For simplicity, we assume the target eigenvalue is simple, i.e., it has algebraic multiplicity 1. The eigenvalues will usually be labelled such that $\lambda_1$ is the target eigenvalue (see, e.g., Theorem~\ref{thm:ii:convergence} below).

There are a wide variety of methods that can be used to solve~\eqref{eq:eigvalproblem} numerically, see, e.g., the monographs \cite{saad2011} and \cite{boermmehl}. One of the most widely used and simplest methods is the \emph{(shifted) inverse iteration}. Starting with a non-zero vector $\vec{x}^{(0)}$ and a shift $\mu \in \bbR$, one computes
\begin{equation}
  \label{eq:sii}
  \vec{x}^{(k+1)} = \alpha^{(k)} {(A - \mu I)}^{-1} \vec{x}^{(k)}\,,\quad k=0,1,\dotsc
\end{equation}
where $I$ denotes the $n \times n$ identity matrix and $\alpha^{(k)}$ is a scalar responsible for normalising $\vec{x}^{(k+1)}$. The sequence ${(\vec{x}^{(k)})}_k$ converges linearly to the eigenvector that corresponds to the eigenvalue closest to the shift $\mu$. More precisely, we have the following convergence result. Here, $\theta(\vec{u},\vec{w})$ denotes the angle between the vectors $\vec{u}$ and $\vec{w}$, i.e.,
\begin{equation}
\label{eq:angle_def}
  \theta(\vec{u},\vec{w}) = \arccos\bigg(\frac{\abs{\vec{u}^\ast \vec{w}}}{\norm{\vec{u}}\norm{\vec{w}}}\bigg)\,.
\end{equation}

\begin{theorem}[\cite{boermmehl}, Theorem 4.10]%
  \label{thm:ii:convergence}
  Let $\mu \in \bbR$ be such that $A - \mu I$ is invertible, i.e., $\mu$ is not an eigenvalue of $A$, and label the eigenvalues of $A$ such that 
  \begin{equation*}
    \abs{\lambda_1 - \mu} < \abs{\lambda_2 - \mu} \le \cdots \le \abs{\lambda_n - \mu}\,.
  \end{equation*}
  Then, for the sequence ${(\vec{x}^{(k)})}_{k \ge 0}$ generated by Inverse Iteration \eqref{eq:sii}, we have
  \begin{equation*}
    \tan\big(\theta(\vec{x}^{(k+1)}, \vec{v}_1) \big) \le \frac{\abs{\lambda_1 - \mu}}{\abs{\lambda_2 - \mu}}\tan\big(\theta(\vec{x}^{(k)}, \vec{v}_1)\big)\,.
  \end{equation*}
\end{theorem}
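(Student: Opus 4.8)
The plan is to diagonalise the action of one inverse-iteration step in the orthonormal eigenbasis of $A$ and then read off the tangent of the angle directly from the eigencomponents. Since $A$ is Hermitian, its eigenvectors $\vec{v}_1, \dotsc, \vec{v}_n$ form an orthonormal basis of $\bbC^n$, so I would first expand the current iterate as $\vec{x}^{(k)} = \sum_{j=1}^n c_j \vec{v}_j$ with $c_j = \vec{v}_j^\ast \vec{x}^{(k)}$. Because $(A - \mu I)^{-1}\vec{v}_j = (\lambda_j - \mu)^{-1}\vec{v}_j$, applying one step of \eqref{eq:sii} multiplies the $j$-th coefficient by $(\lambda_j - \mu)^{-1}$, giving $\vec{x}^{(k+1)} = \alpha^{(k)} \sum_{j=1}^n \tfrac{c_j}{\lambda_j - \mu}\vec{v}_j$. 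The normalising scalar $\alpha^{(k)}$ is irrelevant for the angle, since $\theta(\cdot, \vec{v}_1)$ is scale-invariant, so I would simply drop it.

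Next I would express the tangent in terms of the coefficients. Using orthonormality and $\norm{\vec{v}_1} = 1$, the definition \eqref{eq:angle_def} gives $\cos\theta(\vec{x}^{(k)}, \vec{v}_1) = \abs{c_1}/(\sum_j \abs{c_j}^2)^{1/2}$, whence
\[
  \tan\theta(\vec{x}^{(k)}, \vec{v}_1) = \frac{\big(\sum_{j \ge 2}\abs{c_j}^2\big)^{1/2}}{\abs{c_1}}\,.
\]
Applying the same formula to the coefficients $c_j/(\lambda_j - \mu)$ of $\vec{x}^{(k+1)}$ and factoring $\abs{\lambda_1 - \mu}$ out of the denominator yields
\[
  \tan\theta(\vec{x}^{(k+1)}, \vec{v}_1) = \abs{\lambda_1 - \mu}\,\frac{\big(\sum_{j \ge 2}\abs{c_j}^2/\abs{\lambda_j - \mu}^2\big)^{1/2}}{\abs{c_1}}\,.
\]

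The final step is the ordering estimate. By hypothesis $\abs{\lambda_j - \mu} \ge \abs{\lambda_2 - \mu}$ for every $j \ge 2$, so each term in the numerator sum satisfies $\abs{\lambda_j - \mu}^{-2} \le \abs{\lambda_2 - \mu}^{-2}$; pulling the constant $\abs{\lambda_2 - \mu}^{-1}$ out of the square root and recombining with the expression for $\tan\theta(\vec{x}^{(k)}, \vec{v}_1)$ gives exactly the claimed bound. The only point requiring care is that the tangent be well defined, i.e.\ that $c_1 = \vec{v}_1^\ast \vec{x}^{(k)} \neq 0$; this is the standard non-degeneracy assumption that the iterate has a nonzero component along the target eigenvector, and it propagates to $\vec{x}^{(k+1)}$ automatically since $(\lambda_1 - \mu)^{-1} \neq 0$. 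I do not anticipate any genuine obstacle: the argument is essentially the spectral decomposition of a single inverse-iteration step, and the contraction factor $\abs{\lambda_1 - \mu}/\abs{\lambda_2 - \mu} < 1$ emerges directly from the eigenvalue ordering.
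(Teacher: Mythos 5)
Your proof is correct: the spectral expansion of one inverse-iteration step, the coefficient formula for the tangent, and the ordering estimate $\abs{\lambda_j - \mu} \ge \abs{\lambda_2 - \mu}$ for $j \ge 2$ together give exactly the claimed bound, and your handling of the degenerate case $c_1 = 0$ (where the right-hand side is infinite and the inequality is vacuous) is appropriate. Note that the paper does not prove this statement itself but cites it from \cite{boermmehl} (Theorem 4.10); your argument is the standard one used there, so there is no substantive difference in approach.
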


Inverse Iteration as given in~\eqref{eq:sii} only yields approximations of an eigenvector. To obtain an approximation of the corresponding eigenvalue the Rayleigh Quotient can be used. 
\begin{definition}[Rayleigh Quotient]
  Let $A \in \bbC^{n \times n}$. The mapping
  \begin{equation*}
    \raq_{A} : \bbC^n \setminus \{\vec{0}\} \rightarrow \bbC, \qquad
    \vec{x} \mapsto \frac{\vec{x}^\ast A \vec{x}}{\vec{x}^\ast \vec{x}}
  \end{equation*}
  is called the \emph{Rayleigh Quotient} corresponding to the matrix $A$.
\end{definition}
The following result specifies the quality of the Rayleigh Quotient as an estimate
for the eigenvalue.
\begin{theorem}[\cite{boermmehl}, Theorem 4.6]%
  \label{thm:quadacc}
  Let $A \in \bbC^{n \times n}$ be Hermitian, let   $\vec{x} \in \bbC^n$ be a nonzero vector and let $(\lambda, \vec{v})$ be an eigenpair of $A$. Then
  \begin{equation}
  \label{eq:RQ_quad}
    \abs{\raq_{A}(\vec{x}) - \lambda} \le \norm{A - \lambda I} \tan^2\big( \theta(\vec{x}, \vec{v})\big)\,,
  \end{equation}
  that is, the Rayleigh Quotient is a quadratically accurate estimate of an eigenvalue.
\end{theorem}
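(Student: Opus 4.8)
The plan is to exploit the scale-invariance of both the Rayleigh Quotient and the angle $\theta$ to choose a convenient normalisation of $\vec{x}$, and then to split $\vec{x}$ into its component along $\vec{v}$ and an orthogonal remainder. Since $\raq_A(\alpha\vec{x}) = \raq_A(\vec{x})$ and $\theta(\alpha\vec{x},\vec{v}) = \theta(\vec{x},\vec{v})$ for every nonzero $\alpha \in \bbC$, and since the bound is vacuous when $\theta(\vec{x},\vec{v}) = \pi/2$ (the right-hand side being infinite), I may assume $\vec{v}^\ast\vec{x} \neq 0$ and rescale so that $\vec{v}^\ast\vec{x} = 1$. Writing $\vec{x} = \vec{v} + \vec{r}$ with $\vec{r} \perp \vec{v}$ and $\norm{\vec{v}} = 1$, a short computation gives $\cos\theta = 1/\sqrt{1 + \norm{\vec{r}}^2}$, and hence the key identity $\norm{\vec{r}}^2 = \tan^2\theta(\vec{x},\vec{v})$.

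Next I would compute the Rayleigh Quotient in this decomposition. Expanding the numerator $\vec{x}^\ast A\vec{x} = (\vec{v} + \vec{r})^\ast A(\vec{v} + \vec{r})$ produces four terms, and the crucial step is that the two cross terms vanish. Here both the eigenvector relation and Hermiticity are needed: from $A = A^\ast$ and $A\vec{v} = \lambda\vec{v}$ with $\lambda \in \bbR$ one obtains $\vec{v}^\ast A = \lambda\vec{v}^\ast$, so that $\vec{v}^\ast A\vec{r} = \lambda\,\vec{v}^\ast\vec{r} = 0$ and similarly $\vec{r}^\ast A\vec{v} = 0$. Together with $\vec{x}^\ast\vec{x} = 1 + \norm{\vec{r}}^2$ this reduces the quotient to
\[
  \raq_A(\vec{x}) - \lambda = \frac{\vec{r}^\ast(A - \lambda I)\vec{r}}{1 + \norm{\vec{r}}^2}\,.
\]

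Finally I would estimate this expression. Bounding the Hermitian quadratic form by the operator norm, $\abs{\vec{r}^\ast(A - \lambda I)\vec{r}} \le \norm{A - \lambda I}\,\norm{\vec{r}}^2$, and discarding the denominator (which is at least $1$) yields $\abs{\raq_A(\vec{x}) - \lambda} \le \norm{A - \lambda I}\,\norm{\vec{r}}^2 = \norm{A - \lambda I}\tan^2\theta(\vec{x},\vec{v})$, which is the claim. I do not expect a genuine obstacle here, as the computation is elementary. The only points requiring care are the reduction to the normalisation $\vec{v}^\ast\vec{x} = 1$ (together with the degenerate case $\theta = \pi/2$), and the observation that Hermiticity is exactly what makes the cross terms disappear; without it one would retain a contribution of order $\norm{\vec{r}}$ and thus only linear, rather than quadratic, accuracy. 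I note in passing that keeping the denominator $1 + \norm{\vec{r}}^2$ gives the sharper estimate with $\sin^2\theta$ in place of $\tan^2\theta$, so the stated bound follows from the simple relaxation $\sin^2\theta \le \tan^2\theta$.
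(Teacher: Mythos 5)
Your proof is correct and complete: the normalisation $\vec{v}^\ast\vec{x}=1$, the orthogonal splitting $\vec{x}=\vec{v}+\vec{r}$ with $\norm{\vec{r}}^2=\tan^2\theta$, the vanishing of the cross terms via Hermiticity, and the final operator-norm bound are all sound, and your remark that the denominator yields the sharper $\sin^2\theta$ bound is also accurate. Note that the paper itself gives no proof of this statement — it is quoted from the reference (B\"orm and Mehl, Theorem 4.6) — so there is nothing to compare against; your argument is the standard one found there.
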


It is now quite natural to replace the fixed shift $\mu$ in inverse iteration by the Rayleigh Quotient of the current vector iterate. The resulting algorithm is called \emph{Rayleigh Quotient Iteration} (RQI) and is summarised in Algorithm~\ref{alg:rqi}.
\begin{algorithm}[t]
  \caption{Classic Rayleigh Quotient Iteration}\label{alg:rqi}
  \begin{algorithmic}[1]
    \Require $\vec{x} \in \bbC^n$ with $\|\vec{x}^{(0)}\| = 1$

    \For{$k=0,1,2,\dotsc$, until convergence}
      \State $\mu^{(k)} \gets {(\vec{x}^{(k)})}^\ast A \vec{x}^{(k)}$
      \Comment{Compute the Rayleigh Quotient}
      \State Solve $(A - \mu^{(k)}I)\vec{y}^{(k+1)} = \vec{x}^{(k)}$ for $\vec{y}^{(k+1)}$
      \State $\vec{x}^{(k+1)} \gets \vec{y}^{(k+1)} / \|{\vec{y}^{(k+1)}}\|$
      \Comment{Normalise}
    \EndFor
\end{algorithmic}
\end{algorithm}
Note that in the computation of the Rayleigh Quotient in Step 2, dividing by ${(\vec{x}^{(k)})}^\ast \vec{x}^{(k)}$ has been omitted, because the vector iterates are always normalised. 

In practice, the iteration is stopped when the 
the approximation is deemed to have converged to within a given tolerance $\textsc{tol} > 0$ or a maximum number of iterations is reached.
The most common test for convergence is when the norm of the residual,
\begin{equation}
\label{eq:res_k}
  \vec{r}^{(k)} \coloneqq  (A - \mu^{(k)}I)\vec{x}^{(k)},
\end{equation}
is below the tolerance $\textsc{tol}$ or the scaled tolerance $\mu^{(k)} \times \textsc{tol}$. The use of the residual as a convergence test is justified by the following \emph{a posteriori} bounds
on the errors.

\begin{proposition}[\cite{saad2011}, Cor. 3.4 and Thm. 3.9]
    Let $\vec{u} \in \bbC^n$ with $\norm{\vec{u}}=1$ and let $\mu = \raq_A(\vec{u})$ be the corresponding Rayleigh quotient. Let $(\lambda, \vec{v})$ be an eigenpair of $A$ such that $\lambda$ is the eigenvalue closest to $\mu$ and let $\delta > 0$ be the distance from $\lambda$ to the rest of the spectrum. Then the residual,
    $\vec{r} = (A - \mu I) \vec{u}$, satisfies
    \begin{equation*}
        |\mu - \lambda| \le \frac{\norm{\vec{r}}^2}{\delta}
        \qquad\text{and}\qquad
        \sin \theta(\vec{u}, \vec{v}) \le \frac{\norm{\vec{r}}}{\delta}\,.
    \end{equation*}
\end{proposition}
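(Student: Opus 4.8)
The plan is to diagonalise $A$ and reduce both inequalities to elementary estimates on the spectral expansion of $\vec{u}$. Since $A$ is Hermitian, fix an orthonormal eigenbasis $\vec{v}_1, \dotsc, \vec{v}_n$ with real eigenvalues $\lambda_1, \dotsc, \lambda_n$, labelled so that $(\lambda, \vec{v}) = (\lambda_1, \vec{v}_1)$, and write $\vec{u} = \sum_j c_j \vec{v}_j$ with $\sum_j \abs{c_j}^2 = 1$ because $\norm{\vec{u}} = 1$. The quantities on the right-hand sides then acquire transparent expressions: since $\vec{v}_1^\ast \vec{u} = c_1$ we get $\cos\theta(\vec{u}, \vec{v}) = \abs{c_1}$, hence $\sin^2\theta(\vec{u}, \vec{v}) = \sum_{j \ne 1}\abs{c_j}^2$; the Rayleigh quotient is the weighted mean $\mu = \sum_j \lambda_j \abs{c_j}^2$; and, expanding $\vec{r} = (A - \mu I)\vec{u} = \sum_j (\lambda_j - \mu)c_j \vec{v}_j$, orthonormality gives $\norm{\vec{r}}^2 = \sum_j (\lambda_j - \mu)^2\abs{c_j}^2$.

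For the eigenvector bound I would discard the $j = 1$ term and use that $\lambda_1$ is the eigenvalue closest to $\mu$, so that every remaining eigenvalue is separated from $\mu$ by at least $\delta$; this yields
\begin{equation*}
  \norm{\vec{r}}^2 \;\ge\; \sum_{j \ne 1}(\lambda_j - \mu)^2\abs{c_j}^2 \;\ge\; \delta^2\sum_{j \ne 1}\abs{c_j}^2 \;=\; \delta^2\sin^2\theta(\vec{u}, \vec{v})\,,
\end{equation*}
which rearranges to $\sin\theta(\vec{u}, \vec{v}) \le \norm{\vec{r}}/\delta$.

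For the eigenvalue bound the key is an exact identity linking the residual to the error. Subtracting $\lambda_1 = \lambda_1\sum_j\abs{c_j}^2$ from $\mu$ gives $\mu - \lambda_1 = \sum_{j\ne 1}(\lambda_j - \lambda_1)\abs{c_j}^2$, and, since $\sum_j(\lambda_j - \mu)\abs{c_j}^2 = 0$, writing $\lambda_j - \lambda_1 = (\lambda_j - \mu) + (\mu - \lambda_1)$ inside the residual sum yields
\begin{equation*}
  \norm{\vec{r}}^2 \;=\; \sum_{j \ne 1}(\lambda_j - \lambda_1)(\lambda_j - \mu)\abs{c_j}^2\,.
\end{equation*}
I would then argue that, because $\lambda_1$ is the eigenvalue nearest $\mu$, for each $j \ne 1$ the factors $\lambda_j - \lambda_1$ and $\lambda_j - \mu$ carry the same sign and $\abs{\lambda_j - \mu} \ge \delta$; bounding each summand below by $\delta\,\abs{\lambda_j - \lambda_1}\,\abs{c_j}^2$ and applying the triangle inequality to the expression for $\mu - \lambda_1$ gives $\norm{\vec{r}}^2 \ge \delta\,\abs{\mu - \lambda_1}$, which is the first claim.

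The two expansions are routine; the step that needs genuine care is the sign-and-gap argument in the last paragraph, i.e.\ extracting $\abs{\lambda_j - \mu} \ge \delta$ and the common sign of the two factors from the single hypothesis that $\lambda$ is the eigenvalue nearest $\mu$. This is precisely where the reading of ``$\delta$ = distance from $\lambda$ to the rest of the spectrum'' is delicate: the quantity that appears most naturally is the distance from $\mu$ (rather than from $\lambda$) to the remaining eigenvalues, and one must check whether the desired lower bound is on $\abs{\lambda_j - \mu}$ or on $\abs{\lambda_j - \lambda}$, invoking if necessary the sharper Kato--Temple interval estimate or restricting to the regime in which $\mu$ has already entered the gap around $\lambda$. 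I expect this bookkeeping, rather than any substantial computation, to be the only real obstacle.
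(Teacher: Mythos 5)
The paper itself offers no proof of this proposition---it is quoted verbatim from Saad's book---so your argument can only be measured against the standard proof, and it essentially \emph{is} the standard proof: the eigenbasis expansion, the identity $\norm{\vec{r}}^2=\sum_{j\neq 1}(\lambda_j-\lambda)(\lambda_j-\mu)\abs{c_j}^2$ (obtained from $\sum_j(\lambda_j-\mu)\abs{c_j}^2=0$), and the observation that $\lambda_j-\lambda$ and $\lambda_j-\mu$ share a sign for $j\neq 1$ when $\lambda$ is the eigenvalue nearest $\mu$ are all correct, and they are exactly how the Kato--Temple bound and the $\sin\theta$ bound are usually derived.

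However, the issue you flag in your last paragraph is a genuine obstruction, not bookkeeping, and your proof as written does not close it. From the hypotheses as printed one cannot conclude $\abs{\lambda_j-\mu}\ge\delta$; the inequality $\abs{\lambda_j-\lambda}\le\abs{\lambda_j-\mu}+\abs{\mu-\lambda}\le 2\abs{\lambda_j-\mu}$ only gives $\abs{\lambda_j-\mu}\ge\delta/2$. Worse, the proposition is false as literally stated: take $A=\diag(0,1)$ and $\vec{u}={(\cos t,\sin t)}^T$ with $0<t<\pi/4$. Then $\mu=\sin^2 t$, the nearest eigenvalue is $\lambda=0$ with eigenvector $\vec{v}=\vec{e}_1$ and $\delta=1$, while $\norm{\vec{r}}=\sin t\cos t$, $\sin\theta(\vec{u},\vec{v})=\sin t$ and $\abs{\mu-\lambda}=\sin^2 t$; the two claimed inequalities then read $1\le\cos t$ and $1\le\cos^2 t$, which fail for every such $t$. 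The statements actually proved in Saad (and in Parlett's Theorem 11.7.1, cited elsewhere in this paper) define $\delta$ as the distance from $\mu$---not from $\lambda$---to the eigenvalues other than $\lambda$. With that definition, your two key lower bounds $\abs{\lambda_j-\mu}\ge\delta$ hold by definition, your proof closes verbatim, and in the example above the Kato--Temple bound is even attained with equality. So: your argument is a complete and correct proof of the correct ($\mu$-gap) statement; for the statement as printed here, the best any argument can deliver is the factor-two-weaker bounds $\sin\theta(\vec{u},\vec{v})\le 2\norm{\vec{r}}/\delta$ and $\abs{\mu-\lambda}\le 2\norm{\vec{r}}^2/\delta$. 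Since the paper uses the proposition only to justify a residual-based stopping criterion, the imprecision is harmless in context, but your instinct that this is ``where the reading is delicate'' was exactly right, and it should be resolved by redefining $\delta$ rather than left as an expected difficulty.
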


Although these bounds depend inversely on the eigenvalue gap, and therefore become inaccurate if the target eigenvalue $\lambda$ is not well-separated from its neighbours, a residual based stopping criterion is often the only option to determine convergence. 
Since RQI converges locally cubically (see below), it is usually sufficient to perform one additional iteration after the stopping criterion is satisfied to guarantee that the computed eigenvalue and eigenvector approximation are sufficiently accurate.

\begin{theorem}[\cite{boermmehl}, Proposition 4.13]%
  \label{thm:rqi:conv}
  For an initial vector $\vec{x}^{(0)} \in \bbC^n$, let ${(\vec{x}^{(k)})}_{k \ge 0}$ be the sequence of vectors generated by RQI and let $\mu^{(k)} = \raq_{A}(\vec{x}^{(k)})$ for $k \ge 0$. Assume that $A - \mu^{(0)}I$ is invertible and label the eigenvalues of $A$ such that
  \begin{equation}
  \label{eq:eigval_order:classic}
    \big|{\lambda_1 - \mu^{(0)}}\big| < \big|{\lambda_2 - \mu^{(0)}}\big| \le \cdots \le \big|{\lambda_n - \mu^{(0)}}\big|\,.
  \end{equation}
  Suppose that there exists a constant $c > 0$ such that
  \begin{equation}
    \label{eq:constant:rqi}
    \tan\big(\theta(\vec{x}^{(0)},\vec{v}_1)\big) \le c \le \sqrt{\frac{\abs{\lambda_1 - \lambda_2}}{2\norm{A - \lambda_1 I}}}\,,
  \end{equation}
  then the next RQI iterate $\vec{x}^{(1)}$ satisfies
  \begin{equation*}
    \tan\big(\theta(\vec{x}^{(1)}, \vec{v}_1) \big) \le 2 \frac{\norm{A - \lambda_1 I}}{\abs{\lambda_1 - \lambda_2}} \tan^3\big(\theta(\vec{x}^{(0)}, \vec{v}_1)\big) \leq c\,,
  \end{equation*}
  i.e., RQI is locally cubically convergent.
\end{theorem}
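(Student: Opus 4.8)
The plan is to prove the single-step bound (from $\vec{x}^{(0)}$ to $\vec{x}^{(1)}$) and observe that, since the same bound also yields $\tan(\theta(\vec{x}^{(1)},\vec{v}_1)) \le c$, the hypotheses are reproduced at the next step, so local cubic convergence follows by induction. The key structural observation is that one RQI step is exactly one shifted-inverse-iteration step with the real shift $\mu^{(0)} = \raq_A(\vec{x}^{(0)})$ (real because $A$ is Hermitian). Thus the idea is to combine the \emph{linear} convergence bound of Theorem~\ref{thm:ii:convergence} with the \emph{quadratic} accuracy of the Rayleigh quotient from Theorem~\ref{thm:quadacc}, the latter supplying the two extra powers of $\tan$ needed to upgrade linear to cubic convergence.

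First I would apply Theorem~\ref{thm:ii:convergence} with $\mu = \mu^{(0)}$. The labelling in \eqref{eq:eigval_order:classic} is precisely the ordering by distance to $\mu^{(0)}$ required there, so $\lambda_1$ is the eigenvalue closest to the shift and
\[
  \tan\big(\theta(\vec{x}^{(1)}, \vec{v}_1)\big) \le \frac{\abs{\lambda_1 - \mu^{(0)}}}{\abs{\lambda_2 - \mu^{(0)}}}\,\tan\big(\theta(\vec{x}^{(0)}, \vec{v}_1)\big)\,.
\]
The whole argument then reduces to controlling this ratio. For the numerator I would invoke Theorem~\ref{thm:quadacc} with the eigenpair $(\lambda_1,\vec{v}_1)$ and $\vec{x} = \vec{x}^{(0)}$, giving
\[
  \abs{\lambda_1 - \mu^{(0)}} = \abs{\raq_A(\vec{x}^{(0)}) - \lambda_1} \le \norm{A - \lambda_1 I}\,\tan^2\big(\theta(\vec{x}^{(0)}, \vec{v}_1)\big)\,.
\]

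I expect the main obstacle to be the denominator, which must be bounded below away from zero; this is exactly where the upper bound on $c$ in \eqref{eq:constant:rqi} is used. Combining the numerator estimate with $\tan(\theta(\vec{x}^{(0)},\vec{v}_1)) \le c$ and $c^2 \le \abs{\lambda_1 - \lambda_2}/(2\norm{A - \lambda_1 I})$ yields $\abs{\lambda_1 - \mu^{(0)}} \le \norm{A - \lambda_1 I}\,c^2 \le \abs{\lambda_1 - \lambda_2}/2$. The (reverse) triangle inequality $\abs{\lambda_2 - \mu^{(0)}} \ge \abs{\lambda_2 - \lambda_1} - \abs{\lambda_1 - \mu^{(0)}}$ then gives $\abs{\lambda_2 - \mu^{(0)}} \ge \abs{\lambda_1 - \lambda_2}/2$. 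Intuitively, the bound on $c$ guarantees the shift has not drifted past the midpoint between $\lambda_1$ and $\lambda_2$, so $\lambda_1$ remains unambiguously closest to $\mu^{(0)}$ and the perturbed gap is at least half the true gap.

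Finally I would substitute the numerator and denominator bounds into the inverse-iteration estimate to obtain
\[
  \tan\big(\theta(\vec{x}^{(1)}, \vec{v}_1)\big) \le \frac{2\,\norm{A - \lambda_1 I}}{\abs{\lambda_1 - \lambda_2}}\,\tan^3\big(\theta(\vec{x}^{(0)}, \vec{v}_1)\big)\,,
\]
which is the claimed cubic bound. The trailing inequality $\le c$ then follows by replacing two of the three $\tan$ factors with $c$ and using once more the defining property $2\norm{A - \lambda_1 I}\,c^2/\abs{\lambda_1 - \lambda_2} \le 1$ of $c$. This both closes the inductive step and confirms the labelling hypotheses remain valid throughout, establishing local cubic convergence.
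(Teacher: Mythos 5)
Your proposal is correct and follows essentially the same route as the paper: the paper states Theorem~\ref{thm:rqi:conv} without proof (citing \cite{boermmehl}), but its own proof of the PRQI analogue, Theorem~\ref{thm:prqi:conv}, is exactly your argument---one step viewed as shifted inverse iteration (Theorem~\ref{thm:ii:convergence}), the numerator $\abs{\lambda_1 - \mu^{(0)}}$ bounded by the quadratic accuracy of the Rayleigh quotient (Theorem~\ref{thm:quadacc}), and the denominator bounded below by the reverse triangle inequality together with the condition on $c$---specialised to $\gamma^{(k)} = 0$. The only caveat is that your closing induction remark glosses over the fact that the eigenvalue second-closest to $\mu^{(1)}$ need not be $\lambda_2$ (the ordering in \eqref{eq:eigval_order:classic} is relative to $\mu^{(0)}$), but since the theorem as stated concerns only the single step from $\vec{x}^{(0)}$ to $\vec{x}^{(1)}$, this does not affect the correctness of your proof of the statement.
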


Although the condition in~\eqref{eq:constant:rqi} is only sufficient but not necessary to ensure convergence to the target eigenvector, it suggests that very accurate initial vectors $\vec{x}^{(0)}$ are required to ensure convergence to the target eigenpair when the gap $\abs{\lambda_1 - \lambda_2}$ is small. When the initial vector is not well-aligned with the target eigenspace, the convergence behaviour of RQI is difficult to predict. In~\cite{battersonsmilie}, the dependence of the basins of attraction around the eigenvectors on the eigenvalue gap is studied. For the $3\times 3$ case it is shown that the basins deteriorate when the eigenvalues around the target eigenvalue are closely spaced; analogous observations and visualisations can also be found in~\cite{pantazisszyld, absil}. In Section~\ref{sec:numerics}, we provide similar visualisations for RQI and our method, which demonstrate a significantly reduced dependence on the eigenvalue gap of our method. 

The idea of modifying RQI to make it more predictable is not new. However, most approaches are aimed at designing methods that ensure convergence to an eigenvalue near the initial shift or in a predefined interval~\cite{szyld, beattiefox}. These approaches are not suitable, however, for very closely spaced eigenvalues. Our goal is to modify RQI so that it utilises the information in the initial vector $\vec{x}^{(0)}$ rather than the initial shift. In this way, the convergence becomes more predictable and less dependent on the eigenvalue gap.

\section{Projected Rayleigh Quotient Iteration}\label{sec:prqi}
In this section, we present the new \textit{Projected Rayleigh Quotient Iteration} (PRQI), along with a simplified version of the algorithm and 
an analysis of the convergence properties.
First, we provide some intuition behind the main idea of introducing a complex
projection into the shift.

Suppose that $\vec{u} \in \bbC^n$ is a unit norm approximation for one of the eigenvectors of a Hermitian matrix $A \in \bbC^{n \times n}$. After relabelling we may assume that 
$\vec{u}$ approximates $\vec{v}_1$, which we shall
refer to as the \emph{target eigenvector}. 
Expanding $\vec{u}$ in the eigenbasis of $A$, we can write 
\[
\vec{u} = \sum_{j = 1}^n \alpha_j \vec{v}_j\,,
\]
where $\abs{\alpha_1} \approx 1$ and $\abs{\alpha_j} \ll 1$ for  $j > 1$,
because $\vec{u} \approx \vec{v}_1$. 
Now, consider the projection onto the orthogonal complement of the span of $\vec{u}$, given by the matrix $I - \vec{u}\vec{u}^\ast$. Adding this projection to $A$, with a complex scaling, for some eigenvector $\vec{v}_m$ we have
\begin{align*}
    \big[A + i(I - \vec{u}\vec{u}^\ast)\big]\vec{v}_m
    &= \bigg[A + i\bigg( I - \sum_{j,k = 1}^n \alpha_j \alpha_k^\ast \vec{v}_j \vec{v}_k^\ast\bigg)\bigg]\vec{v}_m \\
    &= \lambda_m\vec{v}_m + i\bigg( \vec{v}_m - \alpha_m^\ast \sum_{j = 2}^n \alpha_j\vec{v}_j - \alpha_m^\ast \alpha_1 \vec{v}_1\bigg)\,.
\end{align*}
If $\lambda_m$ is not the target eigenvalue, i.e.,  $m\neq 1$, then the last two terms inside the bracket are approximately zero and we have
\begin{equation*}
  \big[A + i(I - \vec{u}\vec{u}^\ast)\big]\vec{v}_m \approx (\lambda_m + i)\vec{v}_m, \quad
  \text{for }m > 1\,.
\end{equation*}
On the other hand, for $m = 1$, since $|\alpha_j| \ll 1$ for $j > 1$,
the sum is approximately zero and since $|\alpha_1| \approx 1$ we have
\begin{equation*}
  \big[A + i(I - \vec{u}\vec{u}^\ast)\big]\vec{v}_1  \approx \lambda_1\vec{v}_1 + i\vec{v}_1 -i\abs{\alpha_1}^2 \vec{v}_1 \approx \lambda_1 \vec{v}_1\,.
\end{equation*}
Hence, perturbing the matrix $A$ by $i(I - \vec{u}\vec{u}^\ast)$ we expect that the eigenvalue corresponding to $\vec{v}_1$ to be barely affected while the remaining eigenvalues will be shifted by approximately~$i$.

To control the magnitude of the perturbation, we introduce the (real) scalar $\gamma > 0$ and consider instead
\begin{equation}
  \label{eq:atilde}
  A \longrightarrow A + i\gamma(I - \vec{u}\vec{u}^\ast) =: \widetilde{A}\,.
\end{equation}
Denoting the eigenvalues of the perturbed matrix $\widetilde{A}$ by
$\widetilde{\lambda}_m$, following the arguments above we have
\begin{equation}
\label{eq:shift_evals}
  \widetilde{\lambda}_1 \approx \lambda_1 \qquad\text{and}\qquad \widetilde{\lambda}_m \approx \lambda_m + i\gamma \quad \text{for } m > 1\,,
\end{equation}
i.e., the eigenvalues of $A$ (except for the target eigenvalue $\lambda_1$) are raised into the complex plane and the distance between the target eigenvalue and the remaining eigenvalues is now approximately $\gamma$ {(again, see Figure~\ref{fig:perturbed_spectrum})}.

{By applying RQI to the matrix $\widetilde{A}$ defined in~\eqref{eq:atilde}, we would compute an eigenpair of $\widetilde{A}$ instead of $A$. Therefore, we replace the initial approximation $\vec{u}$ by the current vector iterate $\vec{x}^{(k)}$ and use the resulting matrix in a shift-invert method with the shift chosen as the Rayleigh quotient of $\vec{x}^{(k)}$ with respect to $A$. Since the approximations in~\eqref{eq:shift_evals} become exact if $\vec{u} = \vec{v}_1$, for sufficiently accurate initial shifts the method will converge to the correct eigenpair $(\lambda_1, \vec{v}_1)$. Due to the increased eigenvalue gap the basin of attraction around the target eigenvector increases, thus making the algorithm more likely to succeed. To further accelerate the method, we also replace the fixed scaling parameter $\gamma$ by a non-negative sequence ${(\gamma^{(k)})}_{k \ge 0}$ that converges to 0.}

The full iteration is summarised in Algorithm~\ref{alg:prqi1}. The stopping criteria can be chosen to be the same as for classic RQI, e.g., terminating when either the norm of the residual \eqref{eq:res_k} is below a given tolerance or a maximum number of iterations is reached.

For real symmetric problems, the eigenvectors are real but due to the imaginary shift, the final vector iterate $\vec{x}^{(K)}$ typically still contains a non-zero imaginary part. We therefore usually perform one step of classic RQI applied to $\real(\vec{x}^{(K)})$ after the loop ($\real(\cdot)$ denotes the component-wise real part).

\begin{algorithm}[t]
  \caption{Projected Rayleigh Quotient Iteration (full version)}%
  \label{alg:prqi1}
  \begin{algorithmic}[1]
    \Require $\vec{x}^{(0)} \in \bbC^n$ with $\|\vec{x}^{(0)}\|=1$
    \For{$k=0,1,2,\dotsc$, until stopping criteria are satisfied}
      \State
      $\mu^{(k)} \gets {(\vec{x}^{(k)})}^\ast A \vec{x}^{(k)}$
      \Comment Compute the Rayleigh Quotient

      \State Update $\gamma^{(k)}$ 
      \Comment{Update scaling factor}

      \State Solve
      $\big[A - \mu^{(k)}I + i\gamma^{(k)}(I -
      \vec{x}^{(k)}{(\vec{x}^{(k)})}^\ast)\big]\vec{y}^{(k+1)} =
      \vec{x}^{(k)}$ for $\vec{y}^{(k+1)}$

      \State
      $\vec{x}^{(k+1)} \gets \vec{y}^{(k+1)} / \|\vec{y}^{(k+1)}\|$
      \Comment{Normalise} \EndFor
  \end{algorithmic}
\end{algorithm}

Note that in practice we allow each $\gamma^{(k)}$ to depend on the previous iterates 
$\vec{x}^{(0)},$ $\vec{x}^{(1)}, \ldots,\, \vec{x}^{(k)}$. Hence, 
the sequence ${(\gamma^{(k)})}_{k \ge 0}$ is not an input to Algorithm~\ref{alg:prqi1},
but instead we have included Step 3 where $\gamma^{(k)}$ is updated.
How the sequence ${(\gamma^{(k)})}_{k \ge 0}$ should be chosen to ensure both stability of the iteration and rapid convergence is discussed in Section~\ref{sec:convergence}. 
First, we derive a simplified version of the algorithm.

\subsection{A simplified algorithm}
The solution of the linear system in Step 4 of Algorithm~\ref{alg:prqi1} requires the solution of the rank-one updated system
\begin{equation*}
  \big[\widetilde{A}^{(k)} - i
  \gamma^{(k)}\vec{x}^{(k)}{(\vec{x}^{(k)})}^\ast\big] \vec{y}^{(k+1)} = \vec{x}^{(k)}\,, 
\end{equation*}
where
\begin{equation*}
  \widetilde{A}^{(k)} \coloneqq A - \mu^{(k)}I + i\gamma^{(k)}I\,.
\end{equation*}
Such systems can be solved by first solving $\widetilde{A}^{(k)} \vec{z}^{(k)} = \vec{x}^{(k)}$ and then updating the solution using the Sherman--Morrison formula as described in, e.g., \cite{hager}. We now show that updating the solution is not necessary because solving $\widetilde{A}^{(k)} \vec{z}^{(k)} = \vec{x}^{(k)}$ and normalising the solution produces the same iterates as Algorithm~\ref{alg:prqi1}.

\begin{lemma}
\label{lem:simple_shift}
  Let $A \in \bbC^{n \times n}$ be Hermitian and $\vec{u} \in \bbC^{n}$ with $\|\vec{u}\| = 1$. Define the matrices\removespace
  \begin{equation*}
    B \coloneqq A - \mu I + i \gamma I
  \quad \text{and} \quad
    C \coloneqq { A - \mu I + i \gamma( I - \vec{u} \vec{u}^\ast) } \,,
  \end{equation*}
  where $\gamma > 0$ and $\mu \in \bbR$ is such that $(\vec{u}, \mu)$ is not an eigenpair of $A$. Then, both $B$ and $C$ are invertible and $B^{-1} \vec{u}$ is a scalar multiple of $C^{-1} \vec{u}$. 
\end{lemma}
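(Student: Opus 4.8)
The plan is to reduce the whole statement to one rank-one update relation together with a single positivity argument. Write $H := A - \mu I$, which is Hermitian because $\mu \in \bbR$, so that $B = H + i\gamma I$ and $C = B - i\gamma\,\vec{u}\vec{u}^\ast$. First I would dispose of the invertibility of $B$: for any $\vec{w} \neq \vec{0}$ the number $\vec{w}^\ast H \vec{w}$ is real, so $\imag(\vec{w}^\ast B \vec{w}) = \gamma\norm{\vec{w}}^2 > 0$; in particular $B\vec{w} \neq \vec{0}$, so $B$ is injective and hence invertible. I then set $\vec{z} := B^{-1}\vec{u}$, which will be the pivot of the argument.

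The next step is to exhibit the scalar relation already at the level of $\vec{z}$. Using $C = B - i\gamma\,\vec{u}\vec{u}^\ast$ and $B\vec{z}=\vec{u}$, a direct computation gives
\[
  C\vec{z} = B\vec{z} - i\gamma(\vec{u}^\ast\vec{z})\vec{u} = \vec{u} - i\gamma(\vec{u}^\ast\vec{z})\vec{u} = c\,\vec{u}, \qquad c := 1 - i\gamma\,\vec{u}^\ast\vec{z}.
\]
If $c \neq 0$, then two conclusions follow simultaneously: $C$ is invertible (shown below), and applying $C^{-1}$ to the identity above yields $C^{-1}\vec{u} = c^{-1}\vec{z} = c^{-1}B^{-1}\vec{u}$, which is precisely the asserted scalar multiple. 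So the whole lemma hinges on the single claim $c \neq 0$.

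Establishing $c \neq 0$ is the crux, and it is exactly here that the hypothesis that $(\vec{u},\mu)$ is not an eigenpair must enter. The trick is to rewrite $c$ so that its real part is manifestly a squared norm. From $B\vec{z}=\vec{u}$ I have $H\vec{z} = \vec{u} - i\gamma\vec{z}$, hence $c = \vec{u}^\ast(\vec{u} - i\gamma\vec{z}) = \vec{u}^\ast H\vec{z}$; substituting $\vec{u} = B\vec{z}$ and $B^\ast = H - i\gamma I$ gives
\[
  c = \vec{z}^\ast B^\ast H\vec{z} = \vec{z}^\ast H^2\vec{z} - i\gamma\,\vec{z}^\ast H\vec{z} = \norm{H\vec{z}}^2 - i\gamma\,\vec{z}^\ast H\vec{z}.
\]
Since $\vec{z}^\ast H\vec{z} \in \bbR$, we read off $\real(c) = \norm{H\vec{z}}^2 \ge 0$, and this vanishes only if $H\vec{z} = \vec{0}$. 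But $H\vec{z} = \vec{0}$ forces $\vec{u} = B\vec{z} = i\gamma\vec{z}$, so $\vec{z}$ is a nonzero multiple of $\vec{u}$ and therefore $H\vec{u} = \vec{0}$, i.e.\ $A\vec{u} = \mu\vec{u}$ --- contradicting the assumption that $(\vec{u},\mu)$ is not an eigenpair. Hence $\real(c) > 0$ and in particular $c \neq 0$.

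Finally I would close the invertibility of $C$ directly, without appealing to the Sherman--Morrison formula. If $C\vec{w} = \vec{0}$ then $B\vec{w} = i\gamma(\vec{u}^\ast\vec{w})\vec{u}$, so applying $B^{-1}$ gives $\vec{w} = i\gamma(\vec{u}^\ast\vec{w})\vec{z}$; taking the inner product with $\vec{u}$ yields $(\vec{u}^\ast\vec{w})\,c = 0$, and since $c \neq 0$ this forces $\vec{u}^\ast\vec{w} = 0$, whence $\vec{w} = \vec{0}$. Thus $C$ is injective and invertible, completing the proof. The only genuine obstacle is the positivity argument for $c$; everything else is routine bookkeeping with the rank-one structure, and the role of the ``not an eigenpair'' hypothesis is cleanly isolated to the single implication $\norm{H\vec{z}}^2 = 0 \Rightarrow A\vec{u}=\mu\vec{u}$.
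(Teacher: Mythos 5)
Your proof is correct, and it takes a genuinely different route from the paper's. The paper proves invertibility of $B$ spectrally (a Hermitian matrix cannot have the non-real eigenvalue $\mu - i\gamma$), then treats $C = B - i\gamma\vec{u}\vec{u}^\ast$ via the Sherman--Morrison formula: $C$ is invertible provided $\beta \coloneqq 1 - i\gamma\,\vec{u}^\ast B^{-1}\vec{u} \neq 0$, and the same formula delivers $C^{-1}\vec{u}$ as a scalar multiple of $B^{-1}\vec{u}$. Your scalar $c = 1 - i\gamma\,\vec{u}^\ast\vec{z}$ is exactly the paper's $\beta$; your relation $C\vec{z} = c\,\vec{u}$ together with the direct injectivity check replaces Sherman--Morrison by elementary rank-one bookkeeping, and your quadratic-form argument $\imag(\vec{w}^\ast B\vec{w}) = \gamma\norm{\vec{w}}^2 > 0$ replaces the eigendecomposition of $A$.

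The substantive difference is how $\beta \neq 0$ is established, and here your version is not just different but better. The paper expands in the eigenbasis and asserts that $\imag(i\gamma\vec{u}^\ast B^{-1}\vec{u}) = \sum_{j} \gamma(\lambda_j-\mu)\abs{w_j}^2/\bigl({(\lambda_j-\mu)}^2+\gamma^2\bigr)$ is nonzero. That assertion does not follow from the ``not an eigenpair'' hypothesis and can in fact fail: for $A = \diag(-1,1)$, $\mu = 0$ and $\vec{u} = {(1,1)}^T/\sqrt{2}$, the pair $(\vec{u},\mu)$ is not an eigenpair, yet the two terms cancel and $\imag(\beta) = 0$. Your argument bounds the real part instead, $\real(c) = \norm{(A - \mu I)\vec{z}}^2$ with $\vec{z} = B^{-1}\vec{u}$, and shows it vanishes only if $A\vec{u} = \mu\vec{u}$; in the eigenbasis this reads $\real(\beta) = \sum_j {(\lambda_j-\mu)}^2 \abs{w_j}^2/\bigl({(\lambda_j-\mu)}^2+\gamma^2\bigr) > 0$, which settles the cancellation example above and every other case. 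So your proof repairs the one step of the paper's argument that is genuinely flawed, and it cleanly isolates the only place where the eigenpair hypothesis is needed. What the paper's eigenbasis computation buys in exchange is explicitness --- closed-form expressions for $\beta$ and the multiplier --- which is helpful for intuition but not needed for the lemma.
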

\begin{proof}
  Writing $B = A - (\mu - i\gamma)I$, we see that $B$ is singular if and only if $\mu - i \gamma$ is an eigenvalue of $A$. Since $A$ is Hermitian all of its eigenvalues are real, but since $\gamma > 0$, we have $\mu - i \gamma \in \bbC \setminus \bbR$, which cannot be an eigenvalue of $A$. Thus, $B$ is invertible.
  
  Now, observe that $C = B - i \gamma \vec{u}\vec{u}^\ast$. The Sherman--Morrison formula (see, e.g., \cite[p.~50]{golub1996}) states that $C$ is invertible if $i \gamma \vec{u}^\ast B^{-1} \vec{u} \neq 1$. To check that this holds, let $A = V D V^\ast$ be an eigendecomposition of $A$ with a unitary matrix $V$, whose columns are the eigenvectors of $A$, and a diagonal matrix $D = \diag(\lambda_1, \dotsc, \lambda_n)$ containing the corresponding eigenvalues. We then have $B = V(D - \mu I + i \gamma I)V^\ast$ and consequently
  \begin{equation*}
      B^{-1} 
      =  {(V^\ast)}^{-1} {(D- \mu I + i \gamma I)}^{-1} V^{-1} 
      = V {(D- \mu I + i \gamma I)}^{-1} V^\ast \,.
  \end{equation*}
Thus,
  {\begin{equation*}
      \vec{u}^\ast B^{-1} \vec{u} 
      = \vec{u}^\ast V {(D- \mu I + i \gamma I)}^{-1} V^\ast \vec{u}
      = \sum_{j=1}^n \frac{\abs{w_j}^2}{\lambda_j - \mu + i\gamma}\,,
  \end{equation*}}
  where we introduced $\vec{w} \coloneqq V^\ast\vec{u}$. 
Multiplying by $i\gamma$ and taking the {imaginary} part gives
{\begin{equation*}
    \imag(i \gamma \vec{u}^\ast B^{-1} \vec{u})
     = \sum_{j=1}^n \frac{\gamma (\lambda_j - \mu) \abs{w_j}^2}{{(\lambda_j - \mu)}^2 + \gamma^2} \neq 0.
\end{equation*}}
In particular, $i\gamma \vec{u}^\ast B^{-1} \vec{u} \neq 1$ so that $C$ is also invertible. {Defining $\beta \coloneqq 1 - i \gamma \vec{u}^\ast B^{-1} \vec{u} \neq 0$, the Sherman--Morrison formula thus gives
\begin{align*}
    C^{-1} \vec{u} &= {\big( B - i \gamma \vec{u} \vec{u}^\ast \big)}^{-1} \vec{u} \\
                         &= \big(
                           B^{-1} + \beta^{-1} B^{-1} \vec{u} i \gamma \vec{u}^\ast B^{-1}
                           \big) \vec{u}
                         = B^{-1} \vec{u} ( 1 + \beta^{-1} i \gamma \vec{u}^\ast B^{-1} \vec{u})
                         \,.
\end{align*} 
}
\end{proof}

Replacing $\mu$ by $\mu^{(k)}$, $\gamma$ by $\gamma^{(k)}$ and $\vec{u}$ by $\vec{x}^{(k)}$ in Lemma~\ref{lem:simple_shift} implies that the solution $\vec{y}^{(k)}$ of the linear system in Step 4 of Algorithm~\ref{alg:prqi1} is a scalar multiple of the solution $\vec{z}^{(k)}$ of%
\removespace
\begin{equation*}
  \big[A - \mu^{(k)}I + i\gamma^{(k)}I\big]\vec{z}^{(k)} = \vec{x}^{(k)}\,.
\end{equation*}
Since the next iterate $\vec{x}^{(k+1)}$ is obtained by normalising $\vec{y}^{(k)}$, normalising $\vec{z}^{(k)}$ yields the same iterate (up to a sign).
Hence, it suffices to simply add an imaginary shift and Algorithm~\ref{alg:prqi1} can be simplified to the iteration given in Algorithm~\ref{alg:prqi2}.

\begin{algorithm}[t]
  \caption{Projected Rayleigh Quotient Iteration (simplified version)}%
  \label{alg:prqi2}
  \begin{algorithmic}
    \Require $\vec{x}^{(0)} \in \bbC^n$ with $\|\vec{x}^{(0)}\|=1$
    \For{$k=0,1,2,\dotsc$, until stopping criteria are satisfied}
    \State
    $\mu^{(k)} \gets {(\vec{x}^{(k)})}^\ast A \vec{x}^{(k)}$
    \Comment Compute the Rayleigh Quotient

    \State Update $\gamma^{(k)}$
    \Comment{Update scaling factor}

    \State Solve
    $\big[A - (\mu^{(k)} - i\gamma^{(k)})I\big]\vec{z}^{(k+1)} =
    \vec{x}^{(k)}$ for $\vec{z}^{(k+1)}$

    \State
    $\vec{x}^{(k+1)} \gets \vec{z}^{(k+1)} / \|{\vec{z}^{(k+1)}}\|$
    \Comment{Normalise} \EndFor
  \end{algorithmic}
\end{algorithm}

\subsection{Convergence}\label{sec:convergence}
We now turn to the question of how to choose ${(\gamma^{(k)})}_{k \ge 0}$. Throughout this section, $(\lambda_1, \vec{v}_1)$ denotes the target eigenpair of $A$ and ${(\vec{x}^{(k)})}_{k \ge 0}$ is the sequence of vectors that is obtained by Algorithm~\ref{alg:prqi2}.

The rapid convergence rate of RQI can be partly attributed to quadratic accuracy of the Rayleigh Quotient (cf.\ Theorem~\ref{thm:quadacc}), i.e.,
\begin{equation*}
  | \mu^{(k)} - \lambda_1| =
  \mathcal{O}\big(\tan^2(\theta(\vec{x}^{(k)}, \vec{v}_1)) \big)\,,
\end{equation*}
which leads to cubic convergence as in Theorem~\ref{thm:rqi:conv}.
In light of this, we will assume that $\gamma^{(k)}$ is chosen to decay either linearly or quadratically in $\tan(\theta(\vec{x}^{(k)}, \vec{v}_1)$.
{The following condition initially may seem restrictive, but it can be
satisfied by the natural choice of defining $\gamma^{(k)}$ in terms of the residual
(see Lemma~\ref{lem:res_spread} below).}

\begin{assumption}
\label{asm:gamma}
For $q \in \{ 0,1\}$ let ${(\gamma^{(k)})}_{k \ge 0}$ be such that 
\begin{equation}
\label{eq:gamma error bound}
  0 < \gamma^{(k)} \leq
  C_\gamma \tan^{1 + q}(\theta(\vec{x}^{(k)}, \vec{v}_1))
  \quad \text{for all } k \ge 0 \ \text{and some } C_\gamma > 0.
\end{equation}
\end{assumption}

As we will show in the following theorem, this results
in an asymptotic convergence rate that is either quadratic (if $q = 0$ in \eqref{eq:gamma error bound}) or cubic (if $q = 1$ in \eqref{eq:gamma error bound}). In practice, we have observed that choosing $\gamma^{(k)}$ to obtain quadratic convergence ($q = 0$ in \eqref{eq:gamma error bound}) usually only leads to a small number of extra iterations to achieve the desired error tolerance, but has the benefit
of adding stability to the method. {Note that the theorem uses the same eigenvalue
ordering as Theorem~\ref{thm:ii:convergence} (with $\mu = \mu^{(0)} = \raq_A(\vec{x}^{(0)})$) and also as \eqref{eq:eigval_order:classic} in Theorem~\ref{thm:rqi:conv}.}

\begin{theorem}\label{thm:prqi:conv}
   For an initial vector $\vec{x}^{(0)} \in \bbC^n$, let ${(\vec{x}^{(k)})}_{k \ge 0}$ be the sequence of vectors generated by PRQI as in Algorithm~\ref{alg:prqi2} with $(\gamma^{(k)})_{k \ge 0}$ satisfying Assumption~\ref{asm:gamma} with $q = 0$ or $1$, and let $\mu^{(k)} = \raq_{A}(\vec{x}^{(k)})$ for $k \ge 0$. Label the eigenvalues of $A$ such that
  \begin{equation}
    \label{eq:eigval:order}
    |\lambda_1 - \mu^{(0)}| < |\lambda_2 - \mu^{(0)}| \le \cdots \le |\lambda_n - \mu^{(0)}|\,.
  \end{equation}
  Suppose that there exists a constant $c > 0$ such that 
  \begin{equation}
    \label{eq:prqi:const}
    \tan(\theta(\vec{x}^{(0)},\vec{v}_1)) \le c \le {\left({\frac{\abs{\lambda_1 - \lambda_2}}{2(\norm{A - \lambda_1 I} + C_\gamma)}}\right)}^{1/(1+q)}.
  \end{equation}
  for $C_\gamma > 0$ as in \eqref{eq:gamma error bound}, then the next PRQI iterate $\vec{x}^{(1)}$ satisfies 
  \begin{equation}
\label{eq:prqi_err}
    \tan(\theta(\vec{x}^{(1)}, \vec{v}_1)) \le 2 \frac{\norm{A - \lambda_1 I} +  C_\gamma}{\abs{\lambda_1 - \lambda_2}} \tan^{2 + q}(\theta(\vec{x}^{(0)}, \vec{v}_1)) \le c\,,
  \end{equation}
  i.e., PRQI is locally quadratically or cubically convergent,
  depending on $(\gamma^{(k)})_{k \ge 0}$.
\end{theorem}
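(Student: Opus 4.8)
The plan is to recognise that the simplified iteration in Algorithm~\ref{alg:prqi2} is nothing but shifted inverse iteration applied to $A$ with the \emph{complex} shift $\sigma^{(0)} \coloneqq \mu^{(0)} - i\gamma^{(0)}$, and then to mimic the proof of the classic RQI bound in Theorem~\ref{thm:rqi:conv}, replacing the real shift $\mu^{(0)}$ by $\sigma^{(0)}$ throughout. The first thing I would check is that the eigenvalue labelling remains consistent: since $\abs{\lambda_j - \sigma^{(0)}} = \sqrt{{(\lambda_j - \mu^{(0)})}^2 + {(\gamma^{(0)})}^2}$ is a strictly increasing function of $\abs{\lambda_j - \mu^{(0)}}$, the ordering~\eqref{eq:eigval:order} induced by the real shift $\mu^{(0)}$ coincides with the ordering by distance to $\sigma^{(0)}$. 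Hence $\lambda_1$ remains the eigenvalue closest to $\sigma^{(0)}$ and $\lambda_2$ the second closest, exactly as is needed to apply an inverse-iteration estimate.

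The main obstacle is that Theorem~\ref{thm:ii:convergence} is stated only for real shifts, so I would first establish the analogous one-step bound for a complex shift. Because $A$ is Hermitian it has an orthonormal eigenbasis, so writing $\vec{x}^{(0)} = \sum_j c_j \vec{v}_j$ the next iterate satisfies $\vec{z}^{(1)} \propto {(A - \sigma^{(0)}I)}^{-1}\vec{x}^{(0)} = \sum_j {(\lambda_j - \sigma^{(0)})}^{-1} c_j \vec{v}_j$. Estimating $\tan(\theta(\vec{z}^{(1)}, \vec{v}_1))$ by factoring the coefficient $\abs{\lambda_1 - \sigma^{(0)}}^{-1}$ out of the $\vec{v}_1$-component and bounding each remaining factor $\abs{\lambda_j - \sigma^{(0)}}^{-1}$, $j \neq 1$, by $\abs{\lambda_2 - \sigma^{(0)}}^{-1}$ (legitimate by the ordering above), yields
\begin{equation*}
  \tan\big(\theta(\vec{x}^{(1)}, \vec{v}_1)\big) \le \frac{\abs{\lambda_1 - \sigma^{(0)}}}{\abs{\lambda_2 - \sigma^{(0)}}}\,\tan\big(\theta(\vec{x}^{(0)}, \vec{v}_1)\big)\,,
\end{equation*}
which is the exact complex-shift analogue of Theorem~\ref{thm:ii:convergence}. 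This step is routine but is the crux, since it is what lets the rest of the real-shift machinery go through unchanged.

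It then remains to bound the numerator and denominator of this factor. For the numerator I would use $\abs{\lambda_1 - \sigma^{(0)}} \le \abs{\lambda_1 - \mu^{(0)}} + \gamma^{(0)}$ and combine the quadratic accuracy of the Rayleigh quotient from Theorem~\ref{thm:quadacc}, namely $\abs{\lambda_1 - \mu^{(0)}} \le \norm{A - \lambda_1 I}\tan^2(\theta(\vec{x}^{(0)}, \vec{v}_1))$, with Assumption~\ref{asm:gamma}, $\gamma^{(0)} \le C_\gamma \tan^{1+q}(\theta(\vec{x}^{(0)}, \vec{v}_1))$. The slightly fiddly part is handling $q \in \{0,1\}$ uniformly: using that $\tan(\theta(\vec{x}^{(0)}, \vec{v}_1)) \le c \le 1$ (which holds because $\norm{A - \lambda_1 I} \ge \abs{\lambda_1 - \lambda_2}$ forces the right-hand side of~\eqref{eq:prqi:const} to be at most ${(1/2)}^{1/(1+q)} < 1$), both cases collapse to the single estimate $\abs{\lambda_1 - \sigma^{(0)}} \le (\norm{A - \lambda_1 I} + C_\gamma)\tan^{1+q}(\theta(\vec{x}^{(0)}, \vec{v}_1))$. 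For the denominator I would use $\abs{\lambda_2 - \sigma^{(0)}} \ge \abs{\lambda_2 - \mu^{(0)}}$ together with the reverse triangle inequality $\abs{\lambda_2 - \mu^{(0)}} \ge \abs{\lambda_1 - \lambda_2} - \abs{\lambda_1 - \mu^{(0)}}$, and then invoke~\eqref{eq:prqi:const} in the form $(\norm{A - \lambda_1 I} + C_\gamma)\tan^{1+q}(\theta(\vec{x}^{(0)}, \vec{v}_1)) \le \tfrac12\abs{\lambda_1 - \lambda_2}$ to conclude $\abs{\lambda_2 - \sigma^{(0)}} \ge \tfrac12\abs{\lambda_1 - \lambda_2}$.

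Substituting these two bounds into the one-step estimate gives precisely the claimed inequality $\tan(\theta(\vec{x}^{(1)}, \vec{v}_1)) \le 2\frac{\norm{A - \lambda_1 I} + C_\gamma}{\abs{\lambda_1 - \lambda_2}}\tan^{2+q}(\theta(\vec{x}^{(0)}, \vec{v}_1))$, and the trailing bound $\le c$ follows by factoring out one power of $\tan(\theta(\vec{x}^{(0)}, \vec{v}_1))$ and observing that the remaining prefactor is $\le 1$ by~\eqref{eq:prqi:const}. Since every inequality used is valid at a generic step $k$ rather than only at $k=0$, the same argument applied inductively shows $\tan(\theta(\vec{x}^{(k)}, \vec{v}_1))$ contracts like a $(2+q)$-th power, establishing local quadratic ($q=0$) or cubic ($q=1$) convergence.
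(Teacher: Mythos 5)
Your proposal is correct and follows essentially the same route as the paper's proof: one PRQI step is interpreted as inverse iteration applied to $A$ with the complex shift $\sigma^{(0)} = \mu^{(0)} - i\gamma^{(0)}$, the eigenvalue ordering is preserved under this shift, the numerator $\abs{\lambda_1 - \sigma^{(0)}}$ is bounded via the quadratic accuracy of the Rayleigh quotient together with Assumption~\ref{asm:gamma}, and the denominator is bounded below by $\tfrac12\abs{\lambda_1 - \lambda_2}$ using~\eqref{eq:prqi:const}. If anything, you are more careful than the paper on two points it passes over silently: you explicitly re-derive the inverse-iteration contraction for a \emph{complex} shift (Theorem~\ref{thm:ii:convergence} is stated only for real $\mu$), and you justify collapsing $\tan^2$ and $\tan^{1+q}$ into a single bound when $q=0$ by showing $\tan(\theta(\vec{x}^{(0)},\vec{v}_1)) < 1$ follows from~\eqref{eq:prqi:const}.
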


{\begin{proof}
  We interpret one step of Algorithm~\ref{alg:prqi2} as one step of Inverse Iteration applied to the matrix $A$ using the shift $\sigma^{(0)} \coloneqq \mu^{(0)} - i\gamma^{(0)}$. Since 
  $A$ is Hermitian we have $\mu^{(0)} = \mathcal{R}_{A}(\vec{x}^{(0)}) \in \bbR$,
  and thus Equation~\eqref{eq:eigval:order} implies that
  \begin{equation*}
    |\lambda_1 - \sigma^{(0)}| < |\lambda_2 - \sigma^{(0)}| \le \cdots \le |\lambda_n - \sigma^{(0)}|\,,
  \end{equation*}
  so that the convergence theorem of Inverse Iteration (Theorem~\ref{thm:ii:convergence}) can be applied to obtain
  \begin{equation}
    \label{eq:sii:convprqi}
    \tan(\theta(\vec{x}^{(1)},\vec{v}_1)) \le \frac{|\lambda_1 - \sigma^{(0)}|}{|\lambda_2 - \sigma^{(0)}|} \tan(\theta(\vec{x}^{(0)}, \vec{v}_1))\,.
  \end{equation}
  
  Let us first bound the numerator in the right hand side. By using the quadratic accuracy of the Rayleigh Quotient \eqref{eq:RQ_quad} to bound $|\lambda_1 - \mu^{(0)}| $, recalling that 
  $\mu^{(0)} = \mathcal{R}_A(\vec{x}^{(0)})$, and Assumption~\ref{asm:gamma} to bound $\gamma^{(0)}$ we have
  \begin{align}
  \label{eq:lam1-sig0}
    |\lambda_1 - \sigma^{(0)}|
    \le (\|A - \lambda_1I\| + C_\gamma) \tan^{1 + q}(\theta(\vec{x}^{(0)}, \vec{v}_1))\,.
  \end{align}

Expanding the denominator and using \eqref{eq:lam1-sig0} as well as \eqref{eq:prqi:const}, we obtain the bound
  \begin{align}
  \label{eq:lam2-sig0}
    |\lambda_2 - \sigma^{(0)}|
    &\ge |\lambda_1 - \lambda_2| - |\lambda_1 - \sigma^{(0)}| \\
    &\ge |\lambda_1 - \lambda_2| - \bigl(\|A - \lambda_1I\| + C_\gamma\bigr) \tan^{1+q}(\theta(\vec{x}^{(0)}, \vec{v}_1)) 
    \ge    \frac{|\lambda_1 - \lambda_2|}{2}\,.\nonumber
  \end{align}

  Inserting the estimates \eqref{eq:lam1-sig0} and \eqref{eq:lam2-sig0} into~\eqref{eq:sii:convprqi} gives the desired result \eqref{eq:prqi_err},
  \begin{equation*}
      \tan(\theta(\vec{x}^{(1)},\vec{v}_1))
    \le 2\frac{\norm{A - \lambda_1 I} + C_\gamma}{\abs{\lambda_1 - \lambda_2}}\tan^{2 + q}(\theta(\vec{x}^{(0)}, \vec{v}_1))\,.
  \end{equation*}
  In addition, using twice \eqref{eq:prqi:const} we also obtain
  \begin{align*}
    \tan(\theta(\vec{x}^{(1)},\vec{v}_1))
    \le 2\frac{\norm{A - \lambda_1 I} + C_\gamma}{\abs{\lambda_1 - \lambda_2}} \frac{\abs{\lambda_1 - \lambda_2}}{2(\norm{A - \lambda_1 I} + C_\gamma)}\tan(\theta(\vec{x}^{(0)}, \vec{v}_1))
    \le c\,.
  \end{align*}
\end{proof}
}

Let us compare the conditions~\eqref{eq:eigval:order} and~\eqref{eq:prqi:const} with the corresponding conditions of classic RQI, i.e.,~\eqref{eq:eigval_order:classic} and~\eqref{eq:constant:rqi}. The conditions~\eqref{eq:eigval_order:classic} and~\eqref{eq:eigval:order} are the same and state that the initial shift (i.e., the Rayleigh Quotient of the initial vector) must be closer to $\lambda_1$ (the target eigenvalue) than to any other eigenvalue.

To compare \eqref{eq:constant:rqi} and \eqref{eq:prqi:const}, note that
\begin{equation*}
    \sqrt{\frac{\abs{\lambda_1 - \lambda_2}}{2\norm{A - \lambda_1 I}}} >
    {\left(\frac{\abs{\lambda_1 - \lambda_2}}{2(\norm{A - \lambda_1 I} + C_\gamma)}\right)}^{1/(1+q)}\,,
\end{equation*}
for both $q = 0$ and $1$. The left hand side of this inequality is the upper bound that appears in condition~\eqref{eq:constant:rqi} of classic RQI while the term on the right hand side is precisely the one in~\eqref{eq:prqi:const} in the convergence theorem of PRQI. Thus, the conditions in our convergence theorem are stronger than the respective conditions of classic RQI. In other words, the convergence result given above does not show that our method allows for less accurate initial vectors compared to classic RQI. 

This is not very surprising since we essentially apply RQI to the matrix $A$ but then use the ``wrong'' matrix $A - i\gamma^{(k)}I$ when solving the linear system during the iteration. We believe that in order to show that our method does allow for less accurate initial guesses, the original formulation as given in Algorithm~\ref{alg:prqi1} has to be considered and the effect of the perturbation (cf.~\eqref{eq:atilde}) on the spectrum needs to be quantified,
{which could be an interesting direction for future research.
Another plausible approach is to interpret one step of PRQI as one step of an inexact inverse iteration, as in, e.g., \cite{BERNSMULLER2006389}, applied to the matrix $\widehat{A}^{(k)} \coloneqq A + i\gamma^{(k)}(I - \vec{v}_1 \vec{v}_1^\ast)$ with shift $\mu^{(k)} = \raq_A(\vec{x}^{(k)})$ and where $\vec{v}_1 \in \bbC^n$ is the target eigenvector.
However, this method of proof again leads to \eqref{eq:sii:convprqi} and so does not 
offer any improvement over the result above.
}

Let us now turn to the question of how to choose the sequence ${(\gamma^{(k)})}_{k \ge 0}$ in practice. An easily computable (and in most cases already available) choice for $\gamma^{(k)}$ is the norm (or the squared norm) of the current residual $\vec{r}^{(k)}$ as in \eqref{eq:res_k}.
The following Lemma justifies this choice by showing that the residual can be bounded from above in terms of the angle with the target eigenvector. Here, $\spread(A)$ denotes the spread of the eigenvalues of $A$, i.e., $\spread(A) = \lambda_{\max} - \lambda_{\min}$, where $\lambda_{\max}$ and $\lambda_{\min}$ are the largest and smallest eigenvalue of $A$, respectively.

\begin{lemma}
  \label{lem:res_spread}
  Let $\vec{u} \in \bbC^{n}$, with $\|\vec{u}\| = 1$, be such that
  $\mu = \vec{u}^\ast A \vec{u}$ is closer to $\lambda_1$ than to any
  other eigenvalue of $A$. Then
  \begin{equation*}
    \|(A - \mu I)\vec{u}\|\le \spread(A) \tan(\theta(\vec{u},\vec{v}_1)) \,.
  \end{equation*}
\end{lemma}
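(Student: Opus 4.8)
The plan is to exploit the optimality of the Rayleigh Quotient as a shift: among all scalars $\nu$, the choice $\nu = \mu = \raq_A(\vec{u})$ minimises the residual norm $\norm{(A - \nu I)\vec{u}}$. First I would fix the spectral decomposition of the Hermitian matrix $A$, writing its orthonormal eigenbasis as $\vec{v}_1,\dotsc,\vec{v}_n$ and expanding the unit vector $\vec{u} = \sum_{j=1}^n \alpha_j \vec{v}_j$ with $\sum_j \abs{\alpha_j}^2 = 1$. Since the eigenvectors are orthonormal and $\norm{\vec{u}} = \norm{\vec{v}_1} = 1$, the definition \eqref{eq:angle_def} gives $\cos\theta(\vec{u},\vec{v}_1) = \abs{\vec{u}^\ast\vec{v}_1} = \abs{\alpha_1}$, and hence $\sin^2\theta(\vec{u},\vec{v}_1) = 1 - \abs{\alpha_1}^2 = \sum_{j \ge 2}\abs{\alpha_j}^2$.

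The key step is to observe that $\mu\vec{u}$ is the orthogonal projection of $A\vec{u}$ onto $\spn\{\vec{u}\}$, so that the residual $(A - \mu I)\vec{u}$ has minimal norm over all shifts; in particular $\norm{(A - \mu I)\vec{u}} \le \norm{(A - \lambda_1 I)\vec{u}}$. Concretely, for real $\nu$ one has $\norm{(A - \nu I)\vec{u}}^2 = \norm{A\vec{u}}^2 - 2\nu\mu + \nu^2$, using that $\mu \in \bbR$ because $A$ is Hermitian, and this is minimised at $\nu = \mu$. Routing the estimate through the comparison shift $\lambda_1$, rather than expanding $\norm{(A-\mu I)\vec{u}}^2$ directly, is precisely what avoids a spurious constant in the final bound.

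Once this comparison is in place, the remainder is a direct computation in the eigenbasis: since the $j = 1$ term vanishes,
\begin{equation*}
  \norm{(A - \lambda_1 I)\vec{u}}^2 = \sum_{j=1}^n (\lambda_j - \lambda_1)^2 \abs{\alpha_j}^2 = \sum_{j \ge 2}(\lambda_j - \lambda_1)^2\abs{\alpha_j}^2 \le \spread(A)^2 \sum_{j\ge 2}\abs{\alpha_j}^2,
\end{equation*}
where I used that $\abs{\lambda_j - \lambda_1} \le \lambda_{\max} - \lambda_{\min} = \spread(A)$ for every $j$. Recognising the final sum as $\sin^2\theta(\vec{u},\vec{v}_1)$ and combining with the previous step yields $\norm{(A - \mu I)\vec{u}} \le \spread(A)\sin\theta(\vec{u},\vec{v}_1)$. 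Finally, since $\sin\theta \le \tan\theta$ for $\theta \in [0,\pi/2)$ — and the claimed bound is trivial when $\alpha_1 = 0$, i.e.\ $\theta = \pi/2$ — the stated inequality follows.

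The proof is short and the only nonroutine ingredient is the minimisation property of the Rayleigh Quotient; I expect that to be the main point to get right, since a naive expansion of $\norm{(A-\mu I)\vec{u}}^2$ that treats the target component $j=1$ separately introduces an extra factor and yields a weaker estimate. It is worth noting that the hypothesis that $\mu$ is closer to $\lambda_1$ than to any other eigenvalue is not in fact required for the inequality itself — it merely guarantees that the bound is meaningful (that $\vec{u}$ is compared against the relevant eigenvector) — and that the argument actually delivers the slightly stronger estimate with $\sin$ in place of $\tan$.
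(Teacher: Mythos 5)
Your proof is correct, but it takes a more self-contained route than the paper. The paper's proof is two lines: it cites Theorem 11.7.1 of Parlett's book for the bound $\norm{(A - \mu I)\vec{u}} \le \spread(A)\sin(\theta(\vec{u},\vec{v}_1))$ and then passes from $\sin$ to $\tan$. What you have done, in effect, is prove that cited theorem from first principles: the minimal-residual property of the Rayleigh quotient shift (that $\nu = \mu$ minimises $\norm{(A-\nu I)\vec{u}}$ over all shifts $\nu$) lets you replace $\mu$ by the comparison shift $\lambda_1$, after which the eigenbasis expansion gives
\begin{equation*}
  \norm{(A-\lambda_1 I)\vec{u}}^2 = \sum_{j\ge 2}(\lambda_j - \lambda_1)^2\abs{\alpha_j}^2 \le \spread(A)^2 \sin^2\big(\theta(\vec{u},\vec{v}_1)\big)\,,
\end{equation*}
and both steps, including the degenerate case $\theta = \pi/2$, check out. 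What the paper's approach buys is brevity; what yours buys is self-containedness, plus two observations that the citation hides: the hypothesis that $\mu$ is closer to $\lambda_1$ than to any other eigenvalue is never used (your argument works with any eigenvector in place of $\vec{v}_1$; the hypothesis only makes the bound meaningful in the context where $(\lambda_1,\vec{v}_1)$ is the target eigenpair), and the estimate actually holds with the slightly stronger $\sin$ in place of $\tan$ --- which is also the form of the intermediate inequality in the paper's proof.
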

\begin{proof}
Theorem 11.7.1 in \cite{parlett1998} gives
\[
\|(A - \mu I)\vec{u}\| \le \spread(A) \sin(\theta(\vec{u},\vec{v}_1))\,.
\]
The fact that $\sin(\theta(\vec{x}^{(0)}, \vec{v}_1)) < \tan(\theta(\vec{x}^{(0)}, \vec{v}_1))$ follows from the definition of the angle (Equation~\eqref{eq:angle_def}) and the Cauchy--Schwarz inequality.
\end{proof}

\begin{corollary}
If $\gamma^{(k)} = \|\vec{r}^{(k)}\|$ for all $k \ge 0$, 
then PRQI converges locally quadratically. 
If $\gamma^{(k)} = \|\vec{r}^{(k)}\|^2$ for all $k \ge 0$, 
then PRQI converges locally cubically. 
\end{corollary}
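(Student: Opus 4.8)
The plan is to reduce the corollary to Theorem~\ref{thm:prqi:conv} by verifying that each of the two residual-based choices of $\gamma^{(k)}$ fulfils Assumption~\ref{asm:gamma}. The only gap to bridge is that Assumption~\ref{asm:gamma} is phrased in terms of $\tan(\theta(\vec{x}^{(k)}, \vec{v}_1))$, whereas the prescribed $\gamma^{(k)}$ is defined through the residual norm $\|\vec{r}^{(k)}\|$; Lemma~\ref{lem:res_spread} is exactly the tool that converts one into the other.

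Concretely, since $\vec{r}^{(k)} = (A - \mu^{(k)} I)\vec{x}^{(k)}$ with $\mu^{(k)} = \raq_A(\vec{x}^{(k)})$, Lemma~\ref{lem:res_spread} (applied with $\vec{u} = \vec{x}^{(k)}$) gives $\|\vec{r}^{(k)}\| \le \spread(A)\,\tan(\theta(\vec{x}^{(k)}, \vec{v}_1))$, provided $\mu^{(k)}$ is closer to $\lambda_1$ than to any other eigenvalue. For the choice $\gamma^{(k)} = \|\vec{r}^{(k)}\|$ this is precisely the $q = 0$ bound of Assumption~\ref{asm:gamma} with $C_\gamma = \spread(A)$, and Theorem~\ref{thm:prqi:conv} then yields local quadratic convergence. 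For $\gamma^{(k)} = \|\vec{r}^{(k)}\|^2$, squaring the same bound gives the $q = 1$ case with $C_\gamma = \spread(A)^2$, and the theorem yields local cubic convergence.

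The main obstacle is to guarantee the proviso of Lemma~\ref{lem:res_spread} --- that $\mu^{(k)}$ remains the eigenvalue closest to $\lambda_1$ --- at every iterate, not merely at $k = 0$; the same condition is needed to invoke the eigenvalue ordering~\eqref{eq:eigval:order} in the theorem at each step. I would handle this by shrinking the admissible tolerance $c$ in~\eqref{eq:prqi:const} if necessary so that, via the quadratic accuracy of the Rayleigh Quotient (Theorem~\ref{thm:quadacc}), $|\mu^{(k)} - \lambda_1|$ stays below half the gap separating $\lambda_1$ from its nearest neighbour whenever $\tan(\theta(\vec{x}^{(k)}, \vec{v}_1)) \le c$. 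The contraction estimate~\eqref{eq:prqi_err} ensures the bound $\tan(\theta(\vec{x}^{(k)}, \vec{v}_1)) \le c$ is inherited from one step to the next, so the ordering condition propagates automatically along the whole sequence and $C_\gamma = \spread(A)^{1+q}$ is a single constant valid for all $k$. Finally, the positivity requirement $\gamma^{(k)} > 0$ in Assumption~\ref{asm:gamma} is harmless: if $\|\vec{r}^{(k)}\| = 0$ then $\vec{x}^{(k)}$ is an exact eigenvector, which must coincide (up to phase) with $\vec{v}_1$ since $\mu^{(k)}$ is closest to $\lambda_1$, so PRQI has already converged.
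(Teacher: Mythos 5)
Your proposal is correct and takes essentially the same route as the paper, whose (implicit) proof of the corollary is exactly the combination of Lemma~\ref{lem:res_spread} (yielding Assumption~\ref{asm:gamma} with $C_\gamma = \spread(A)^{1+q}$) and Theorem~\ref{thm:prqi:conv}. The extra care you take in propagating the eigenvalue-ordering proviso along the iteration and in handling the zero-residual case addresses details the paper leaves unstated, and both points are resolved correctly.
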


\subsection{Generalised Eigenvalue Problems}
Many practical applications involve \emph{generalised eigenvalue problems}, which,
for $M \in \bbC^{n \times n}$, are of the form
\begin{equation}
\label{eq:gen-evp}
    A\vec{v} = \lambda M \vec{v}\,.
\end{equation}
Such problems arise, e.g., when discretising a Sturm--Liouville problem using finite elements (cf.~Example~\ref{ex:sturm} in Section \ref{sec:numerics}). Thus, we also extend the PRQI to generalised eigenvalue problems for the case where $M$ is Hermitian and positive definite. 

All eigenvalues of the generalised problem \eqref{eq:gen-evp} are real and the corresponding eigenvectors $\vec{v}_j$, $j=1,\dotsc,n$, can be chosen to be $M$-orthonormal, i.e., $\vec{v}_j^\ast M \vec{v}_k = \delta_{jk}$, see, e.g., \cite[Theorem 15.3.3]{parlett1998}. 

The classic RQI for \eqref{eq:gen-evp} is obtained by replacing the Rayleigh quotient with the generalised Rayleigh quotient
\begin{equation}
\label{eq:gen-RQ}
    \raq_{A,M}(\vec{x}) = \frac{\vec{x}^\ast A \vec{x}}{\vec{x}^\ast M \vec{x}}\,,
\end{equation}
and solving the linear system 
\begin{equation}
\label{eq:gen-rqi-solve}
    \big[A - \raq_{A,M}(\vec{x}^{(k)}) M\big] \vec{y}^{(k+1)} = M\vec{x}^{(k)}
\end{equation}
for $\vec{y}^{(k+1)}$ at each step $k=0,1,\dotsc$, see~\cite[p.\,358]{parlett1998} for details. The procedure for the generalised RQI is then given by modifying Algorithm~\ref{alg:rqi} by replacing the linear solve in Step 3 with \eqref{eq:gen-rqi-solve} and instead normalising with respect to $\|\vec{x}\|_M \coloneqq \sqrt{\vec{x}^\ast M\vec{x}}$.
Convergence can again be tested using the norm of the residual, 
which is now given by $\vec{r}^{(k)}_M \coloneqq (A - \mu^{(k)}M)\vec{x}^{(k)}$.

Given $\vec{u} \approx \vec{v}$ a generalised PRQI for 
\eqref{eq:gen-evp} based on the perturbation
\begin{equation}
\label{eq:gen_pert1}
A \longrightarrow A + i \gamma (M - (M\vec{u})(M\vec{u})^\ast)
\end{equation}
can be derived similarly. The procedure for the generalised PRQI using \eqref{eq:gen_pert1}
is given by updating Algorithm~\ref{alg:prqi1} by replacing the linear solve in Step 4 with
\[
\big[A - \mu^{(k)}M + i \gamma^{(k)}(M - (M\vec{x}^{(k)})(M\vec{x}^{(k)})^*)\big]
M\vec{y}^{(k + 1)} = M\vec{x}^{(k)}
\]
and normalising with respect to $\|\cdot\|_M$. Since only two steps change, we omit the full details and instead 
present a simplified version of the generalised PRQI.

To this end, we can alternatively obtain a 
generalised PRQI by noting that \eqref{eq:gen-evp} can 
be rewritten as a standard Hermitian eigenvalue problem,
\begin{equation*}
    M^{-1/2} A M^{-1/2} \vec{w} = \lambda \vec{w}\,,
\end{equation*}
where $M^{-1/2} = {(M^{1/2})}^{-1}$, with $M^{1/2}$ denoting the matrix square root of $M$ (which exists since we assumed $M$ to be positive definite, see, e.g.,~\cite{horn_johnson_2012}), and $\vec{w} = M^{1/2} \vec{v}$. Then, as for the standard problem, the PRQI for \eqref{eq:gen-evp} is based on the perturbation
\begin{equation}
\label{eq:gen_pert2}
    M^{-1/2} A M^{-1/2} \longrightarrow M^{-1/2} A M^{-1/2} + i \gamma(I - \widetilde{\vec{w}} \widetilde{\vec{w}}^\ast )\,,
\end{equation}
where now $\widetilde{\vec{w}} \approx \vec{w} = M^{1/2}\vec{v}$.
Letting $\widetilde{\vec{w}} = M^{1/2}\vec{u}$, with $\vec{u} \approx \vec{v}$,
then multiplying \eqref{eq:gen_pert2} on the left and right by $M^{1/2}$ we
see that \eqref{eq:gen_pert2} is equivalent to \eqref{eq:gen_pert1}.

The form of the perturbation in \eqref{eq:gen_pert2} is useful because 
Lemma~\ref{lem:simple_shift} again implies that we can use a simplified PRQI algorithm similar to Algorithm~\ref{alg:prqi2}, 
which instead only requires solving the linear system
\begin{equation*}
    \big[M^{-1/2} A M^{-1/2} - (\mu^{(k)} - i \gamma^{(k)})\big] 
    M^{1/2}\vec{z}^{(k+1)} = M^{1/2} \vec{x}^{(k)}
\end{equation*}
for $\vec{z}^{(k+1)}$ at each step. Multiplying this equation by $M^{1/2}$ from the left we see that the computation of $M^{1/2}$ and $M^{-1/2}$ can be avoided,
and thus in the generalised PRQI it is sufficient to solve the linear system
\begin{equation*}
    \big[A - (\mu^{(k)} - i \gamma^{(k)})M\big] \vec{z}^{(k+1)} = M \vec{x}^{(k)}
\end{equation*}
for $\vec{z}^{(k)}$ at each step.
The simplified PRQI for a generalised eigenvalue problem based on this 
procedure is outlined in Algorithm~\ref{alg:prqi3}.

\begin{algorithm}[t]
  \caption{PRQI for a generalised eigenvalue problem}%
  \label{alg:prqi3}
  \begin{algorithmic}[1]
    \Require $\vec{x}^{(0)} \in \bbC^n$ with ${(\vec{x}^{(0)})}^\ast M \vec{x}^{(0)}=1$
    \For{$k=0,1,2,\dotsc$, until stopping criteria are satisfied}
    \State
    $\mu^{(k)} \gets {(\vec{x}^{(k)})}^\ast A \vec{x}^{(k)}$
    \Comment Compute (generalised) Rayleigh Quotient

    \State Update $\gamma^{(k)}$
    \Comment{Update scaling factor}

    \State Solve
    $\big[A - (\mu^{(k)} - i\gamma^{(k)})M\big]\vec{z}^{(k+1)} =
    M\vec{x}^{(k)}$ for $\vec{z}^{(k+1)}$

    \State
    $\vec{x}^{(k+1)} \gets \vec{z}^{(k+1)} / \sqrt{{(\vec{z}^{(k+1)})}^\ast M {\vec{z}^{(k+1)}}}$
    \Comment{Normalise} \EndFor
  \end{algorithmic}
\end{algorithm}

Since Algorithm~\ref{alg:prqi3} is equivalent to Algorithm~\ref{alg:prqi2} applied to $M^{-1/2} A M^{-1/2}$, our convergence analysis from Section~\ref{sec:convergence} still applies. Although the two algorithms are equivalent, in practice Algorithm~\ref{alg:prqi3} is preferable because it avoids computing $M^{1/2}$ and $M^{-1/2}$.
This is especially the case if $M$ is large or numerically ill conditioned.

\section{Numerical Experiments}\label{sec:numerics}
In this section, we present numerical results for six different test problems structured in three examples: the first example discusses the case of real symmetric $3\times 3$ matrices; the second example comprises a comparison of classic RQI and PRQI for four different test matrices; and the last example 
comes from a practical model for photonic crystal fibres.
All of the examples use the simplified PRQI outlined in Algorithm~\ref{alg:prqi2}. {The first two examples use} $\gamma^{(k)} = \| \vec{r}^{(k)} \|$ for $k=0,1,\dotsc$. This is mainly motivated by the first example since in this case PRQI is invariant under scaling and shifting of the matrix under consideration (see Proposition \ref{prop:prqi_invariance} below), which allows for a direct comparison with classic RQI. {The last example uses the squared residual norm for the imaginary shift, i.e., $\gamma^{(k)} = \| \vec{r}^{(k)} \|^2$.} We use a residual-based stopping criterion everywhere, i.e., the iteration is stopped when the norm of the residual falls below
$\textsc{tol} = 10^{-11}$ in Example~\ref{ex:comparison3x3}, below $\textsc{tol} = 10^{-15}$ in Example~\ref{ex:comparison2} and below $\textsc{tol} = 
10^{-8}$ in Example~\ref{ex:sturm}. The code that was used to generate the results is available online at 
\texttt{\url{https://github.com/nilsfriess/PRQI-Examples}}\;.

\begin{example}\label{ex:comparison3x3}
In~\cite{pantazisszyld}, the authors explain that for classic RQI it suffices to consider matrices $A = \diag(-1,s,1)$ for $s \in (-1,1)$ to characterise the convergence behaviour for the real symmetric $3 \times 3$-case completely. The argument is based on the fact that classic RQI is scale and shift invariant
\cite[Proposition 3.1]{pantazisszyld}. This also holds for our method
{in the case where $\gamma^{(k)}$ is given by norm of the residual $\vec{r}^{(k)}$. Note that it does not hold for squared residual norm, since it is not invariant under linear
transformations as in the proposition below.}

\begin{proposition}\label{prop:prqi_invariance}
Let $A \in \mathbb{R}^{n \times n}$ be real symmetric and let $B = \alpha A + \beta I$, $\alpha \neq 0$, $\beta \in \mathbb{R}$. Then, {for $\gamma^{(k)} = \|\vec{r}^{(k)}\|$,} PRQI applied to $A$ and PRQI applied to $B$ produce the same iterates $\vec{x}^{(k)}$, $k = 1,2,\dotsc,$ when started with the same vector $\vec{x}^{(0)}$.
\end{proposition}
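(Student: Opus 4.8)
The plan is to argue by induction on $k$ that the two iterations stay synchronised. Assume the iterates agree at step $k$, i.e.\ $\vec{x}_A^{(k)} = \vec{x}_B^{(k)} =: \vec{x}^{(k)}$; the base case $k = 0$ is the hypothesis. First I would record how the scalar and vector quantities built from $\vec{x}^{(k)}$ transform under $A \mapsto B = \alpha A + \beta I$. Since $\norm{\vec{x}^{(k)}} = 1$, Step 2 gives $\mu_B^{(k)} = {(\vec{x}^{(k)})}^\ast B \vec{x}^{(k)} = \alpha\mu_A^{(k)} + \beta$, and the residual obeys $\vec{r}_B^{(k)} = (B - \mu_B^{(k)}I)\vec{x}^{(k)} = \alpha\,\vec{r}_A^{(k)}$, so that the Step 3 scaling factor is $\gamma_B^{(k)} = \norm{\vec{r}_B^{(k)}} = \abs{\alpha}\,\gamma_A^{(k)}$. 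Because $\gamma^{(k)}$ here depends only on the current iterate, these are the only data feeding into the shift and right-hand side, and the whole comparison reduces to relating the two complex shifts $\sigma_A^{(k)} = \mu_A^{(k)} - i\gamma_A^{(k)}$ and $\sigma_B^{(k)} = \mu_B^{(k)} - i\gamma_B^{(k)}$.

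The crux is the linear solve in Step 3. Substituting the relations above yields $B - \sigma_B^{(k)}I = \alpha(A - \mu_A^{(k)}I) + i\abs{\alpha}\gamma_A^{(k)}I$. When $\alpha > 0$ this factorises cleanly as $B - \sigma_B^{(k)}I = \alpha\big(A - \sigma_A^{(k)}I\big)$, so the system for $B$ reads $\alpha(A - \sigma_A^{(k)}I)\vec{z}_B^{(k+1)} = \vec{x}^{(k)}$, whence $\vec{z}_B^{(k+1)} = \alpha^{-1}\vec{z}_A^{(k+1)}$. Both systems are nonsingular: the shift has imaginary part $-\gamma^{(k)} < 0$ whenever the residual is nonzero, and $A$ real symmetric has real spectrum (this is exactly the invertibility argument of Lemma~\ref{lem:simple_shift}); if $\gamma^{(k)} = 0$ then $\vec{x}^{(k)}$ is already an exact eigenvector and the iteration has terminated. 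Since $\alpha^{-1}$ is a \emph{positive real} scalar, the normalisation in Step 4 cancels it exactly --- not merely up to a unimodular phase --- giving $\vec{x}_B^{(k+1)} = \vec{x}_A^{(k+1)}$ and closing the induction.

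The step I expect to be the real obstacle is the sign of $\alpha$. Because $\gamma^{(k)} = \norm{\vec{r}^{(k)}}$ is nonnegative, it is $\abs{\alpha}$ rather than $\alpha$ that enters the imaginary part of $\sigma_B^{(k)}$; hence for $\alpha < 0$ one instead obtains $B - \sigma_B^{(k)}I = \alpha\big(A - \overline{\sigma_A^{(k)}}I\big)$, with the shift conjugated. As $A$ is real, inverting the conjugated shift produces the conjugate of the vector obtained for $A$, and one checks (already for $A = \diag(1,-1)$) that the $B$-iterates are then the complex conjugates of the $A$-iterates, up to a unimodular phase, rather than equal. This is harmless for the intended use: the target eigenvector $\vec{v}_1$ of a real symmetric matrix may be taken real, so $\theta(\overline{\vec{x}},\vec{v}_1) = \theta(\vec{x},\vec{v}_1)$ and the convergence behaviour is identical. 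Moreover, the reduction of $\diag(a,b,c)$ to $\diag(-1,s,1)$ used in Example~\ref{ex:comparison3x3} employs the positive scaling $\alpha = 2/(c-a)$, so only the $\alpha > 0$ case --- where exact equality of the iterates holds --- is actually needed there.
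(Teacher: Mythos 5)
Your proof follows essentially the same route as the paper's: both rest on the identity $B - \raq_B(\vec{x}^{(k)})I = \alpha\bigl[A - \raq_A(\vec{x}^{(k)})I\bigr]$, deduce from it how the residual and hence $\gamma^{(k)}$ transform, factor the complex-shifted system for $B$ as a scalar multiple of the one for $A$, and let the normalisation step absorb that scalar. The one substantive difference is your handling of the sign of $\alpha$, and there you have caught a genuine oversight: the paper's proof asserts $\gamma^{(k)}_B = \alpha\gamma^{(k)}_A$, which for $\alpha < 0$ is impossible since both sides are norms; the correct relation is $\gamma^{(k)}_B = \abs{\alpha}\,\gamma^{(k)}_A$, exactly as you write. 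Consequently the factorisation $B - (\mu_B^{(k)} - i\gamma_B^{(k)})I = \alpha\bigl[A - (\mu_A^{(k)} - i\gamma_A^{(k)})I\bigr]$ that the paper uses implicitly is valid only for $\alpha > 0$; for $\alpha < 0$ the imaginary shift acquires the wrong sign relative to $\alpha$, one is led to the conjugated shift, and the $B$-iterates are complex conjugates (up to a unimodular factor) of the $A$-iterates rather than equal to them --- so the proposition as literally stated for all $\alpha \neq 0$ needs either your restriction to $\alpha > 0$ or a weakened conclusion. Your mitigating remarks are also accurate: conjugation preserves angles to the (real) eigenvectors of a real symmetric matrix, so the basins of attraction are unchanged, and the reduction to $\diag(-1,s,1)$ in Example~\ref{ex:comparison3x3} uses a positive scaling, so nothing downstream in the paper depends on the case $\alpha < 0$. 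Beyond this, you make explicit two points the paper leaves implicit: the induction over $k$, and the nonsingularity of the shifted systems via the argument of Lemma~\ref{lem:simple_shift}.
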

\begin{proof}
    Let $k \in \mathbb{N}$. Define $\vec{y}_A, \vec{y}_B \in \mathbb{R}^n$ by
    \begin{equation}
    \label{eq:systems_A_B}
        \big[A - (\raq_A(\vec{x}^{(k)}) - i\gamma_A^{(k)})I\big]\vec{y}_A = \vec{x}^{(k)}
        \ \ \text{and}\ \ 
        \big[B - (\raq_B(\vec{x}^{(k)}) - i\gamma_B^{(k)})I\big]\vec{y}_B = \vec{x}^{(k)}
    \end{equation}
    where the subscripts 
    emphasise that we consider the residual w.r.t.\ the matrix $A$ or $B$, respectively, i.e., $\gamma^{(k)}_A = \|[A - \raq_A(\vec{x}^{(k)})I]\vec{x}^{(k)}\|$ and $\gamma^{(k)}_B$ is defined accordingly.
    
    Observe that
    \begin{align*}
B - \raq_B(\vec{x}^{(k)})I
        &= \alpha A + \beta I - \alpha\raq_A(\vec{x}^{(k)})I - \beta I
        = \alpha \big[A - \raq_A(\vec{x}^{(k)})I\big]
    \end{align*}
and hence, $\gamma^{(k)}_B = \alpha\gamma^{(k)}_A$.
    Then equating the left hand sides of both equations in~\eqref{eq:systems_A_B} implies that $\vec{y}_A = \alpha\vec{y}_B$, and since $\vec{y}_A$ and $\vec{y}_B$ are 
    the iterates of PRQI before normalisation, the claim follows.
\end{proof}

Let now $A \in \mathbb{R}^{3 \times 3}$ be a symmetric matrix with eigenvalues $\lambda_1 < \lambda_2 < \lambda_3$. We can scale the matrix such that $\lambda_3 - \lambda_1 = 2$, then we can shift the spectrum such that $\lambda_1 = -1, \lambda_2 \in (-1,1)$ and $\lambda_3 = 1$. Proposition~\ref{prop:prqi_invariance} ensures that these operations do not change the convergence behaviour of our method. Lastly, we can assume that the matrix $A$ is diagonal, because our method is independent under orthogonal transformations of the matrix $A$ (provided the initial vector is transformed accordingly). This is a well-known property of classic RQI and 
as above, can be verified for PRQI. In summary, for PRQI it is also sufficient to consider $A = \diag(-1, s, 1)$ for $s \in (-1,1)$.

\begin{figure}
  \centering
  \begin{subfigure}[b]{0.48\textwidth}
    \centering
    \includegraphics[width=\textwidth]{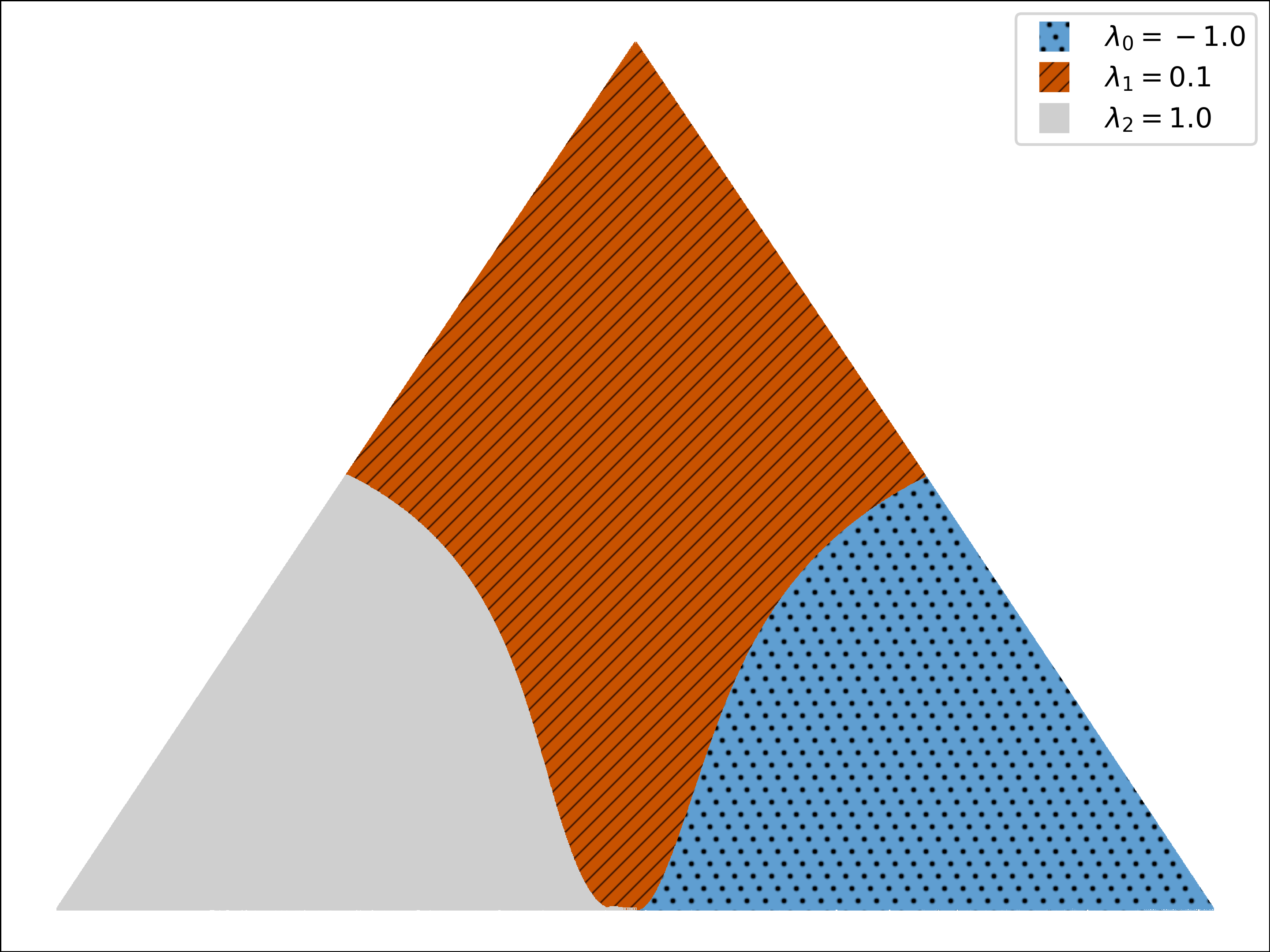}
    \caption{{\small Classic RQI, large gap}}%
    \label{fig:classic_rqi_large}
  \end{subfigure}
  \hfill
  \begin{subfigure}[b]{0.48\textwidth}
    \centering
    \includegraphics[width=\textwidth]{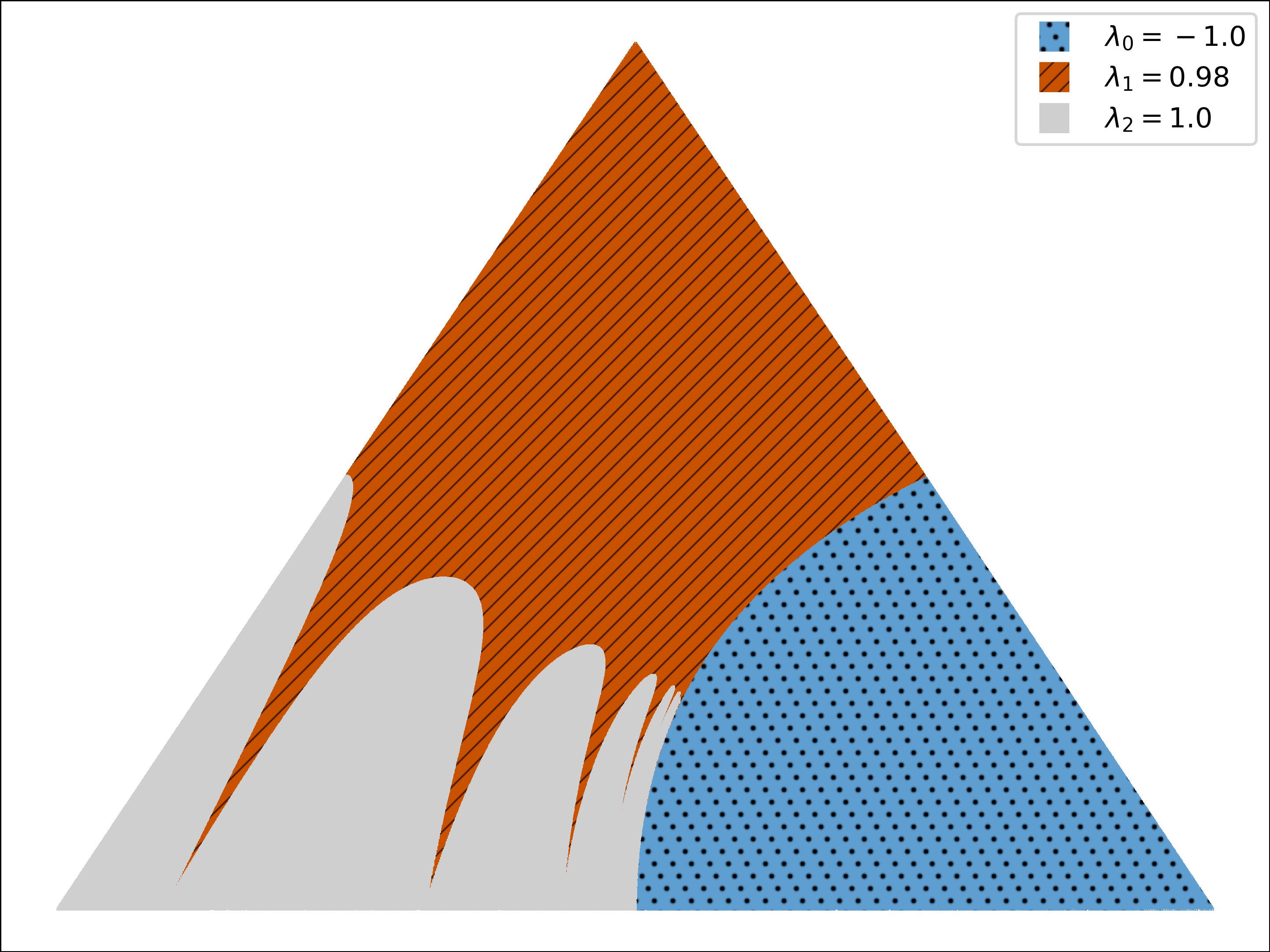}
    \caption{{\small Classic RQI, small gap}}%
    \label{fig:classic_rqi_small}
  \end{subfigure}
  \begin{subfigure}[b]{0.48\textwidth}
    \centering
    \includegraphics[width=\textwidth]{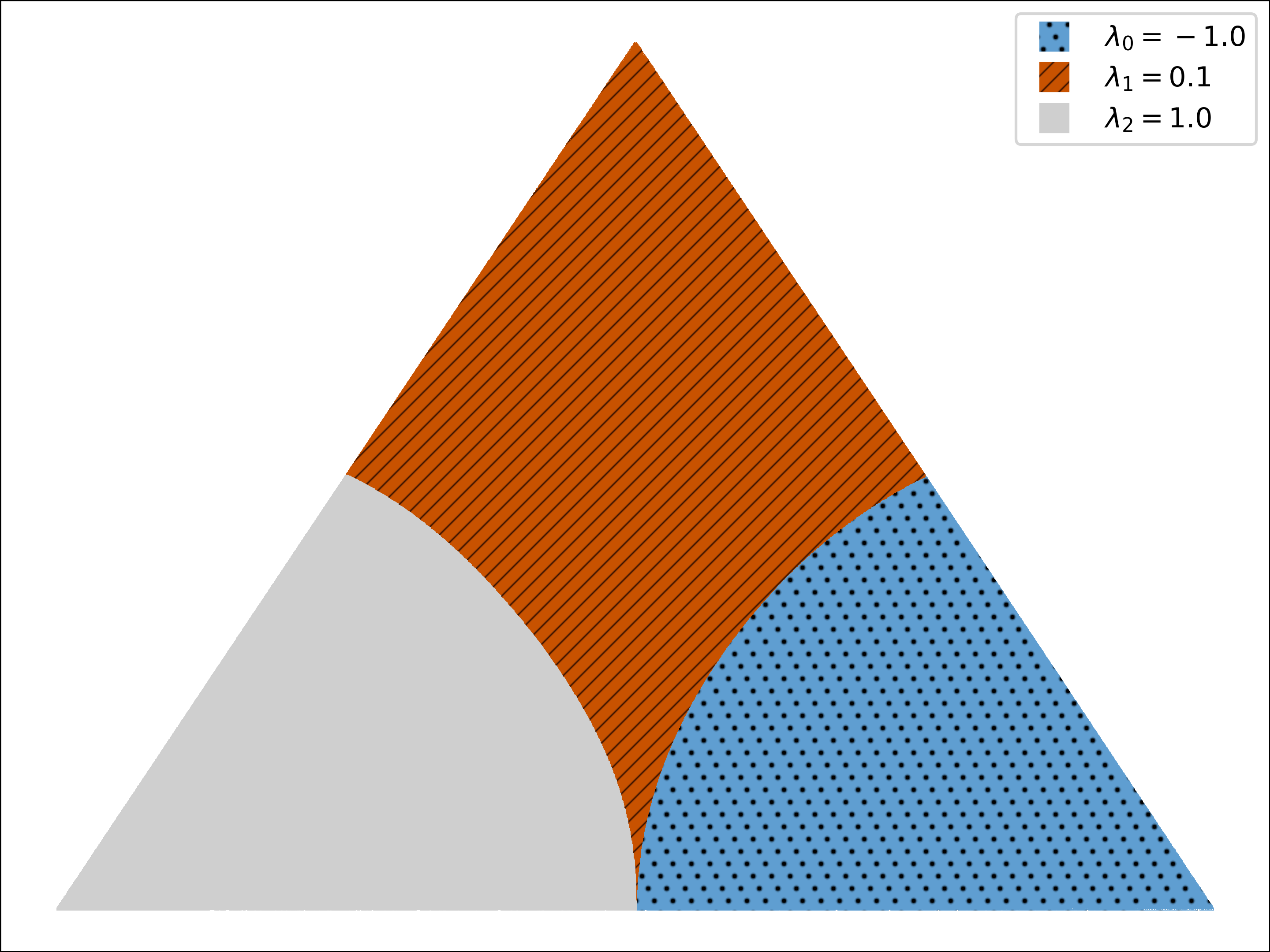}
    \caption{{\small Projected RQI, large gap}}%
    \label{fig:complex_rqi_large}
  \end{subfigure}
  \hfill
  \begin{subfigure}[b]{0.48\textwidth}
    \centering
    \includegraphics[width=\textwidth]{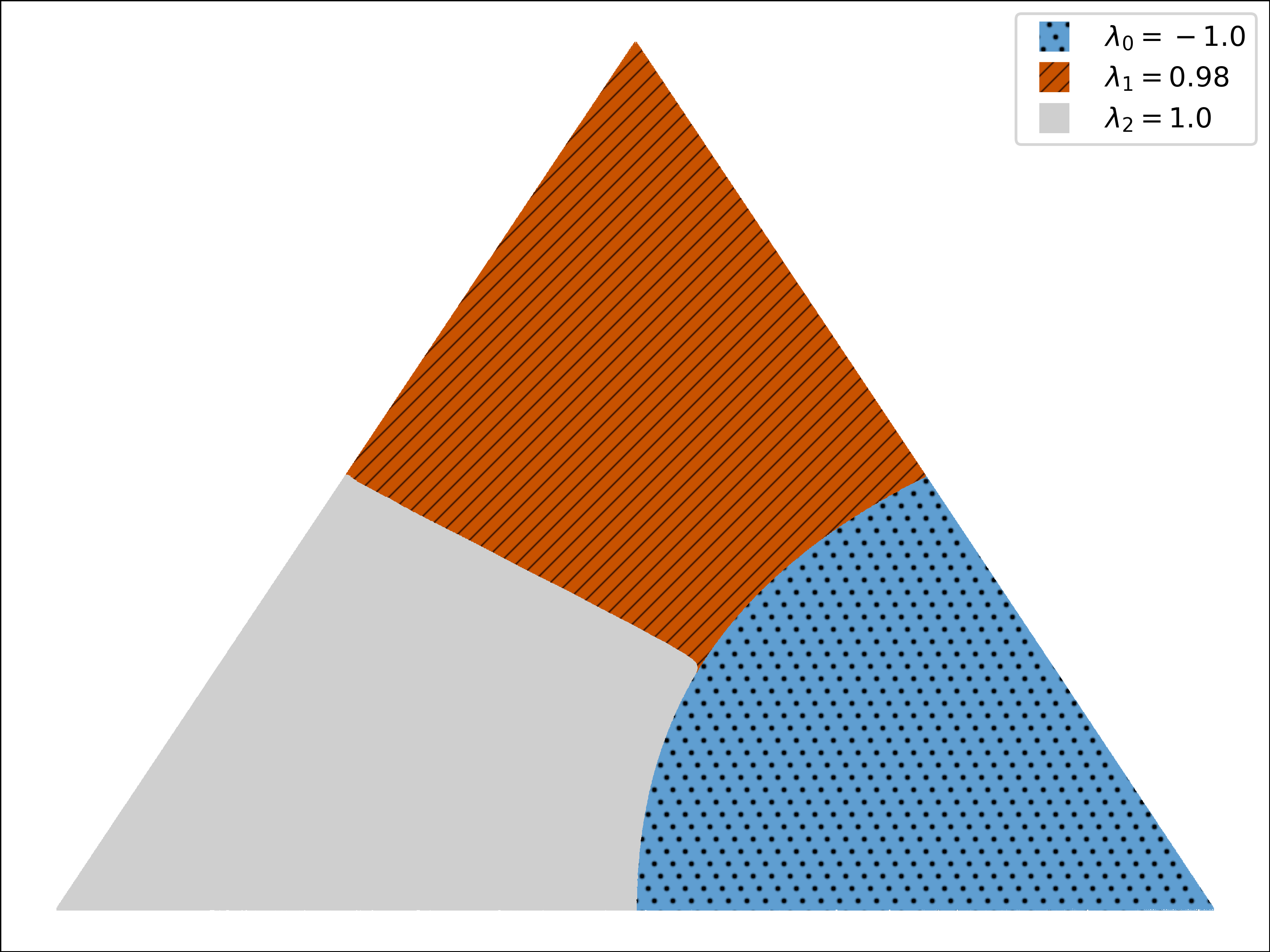}
    \caption{{\small Projected RQI, small gap}}%
    \label{fig:complex_rqi_small}
  \end{subfigure}
  \caption{\label{fig:rqi_shift_comparison} Visualisation of the basins of attraction for classic RQI (top row) and PRQI (bottom row) for different eigenvalue gaps in Example \ref{ex:comparison3x3}. If the eigenvalue gap is small (right column), the borders between the basins of attraction deteriorate for classic RQI; for PRQI the border is more regular.}%
\end{figure}
In Figure~\ref{fig:rqi_shift_comparison}, the basins of attraction of classic RQI and PRQI are visualised for $s = 0.1$ (left column) and $s = 0.98$ (right column). The figures were created by running both algorithms for many initial vectors and recording which eigenpair of $A$ they converged to. The three different coloured regions correspond to the three different eigenpairs. The plots are the intersection of the one-dimensional subspaces spanned by the chosen initial vectors and the unit simplex $\{(x_1,x_2,x_3) \in \bbR^3 \mid x_1 + x_2 + x_3 = 1, x_i > 0\}$ (similar Figures appeared in~\cite{battersonsmilie,absil}).

In the first row we observe the well-known behaviour of classic RQI: as the eigenvalue gap gets smaller, the basins of attraction deteriorate. Our method (second row) is more stable and less dependent on the spacing between eigenvalues. In particular, the border between the basins of attraction corresponding to the two closely spaced eigenvalues is more regular, 
which in practice results in a much more predictable convergence behaviour.
\end{example}

\begin{example}\label{ex:comparison2}
Comparing to classic RQI again, we demonstrate in this example for four test matrices that our PRQI method always converges to the correct eigenpair, provided the initial approximation is sufficiently accurate. 
The test matrices are:
\begin{enumerate}
    \item The $[1,2,1]$ matrix. This matrix is tridiagonal with all diagonal elements set to $2$ and all off-diagonal elements set to $1$.
    \item Wilkinson's $W_{2n+1}^+$ matrix. The $m$th diagonal entry of this tridiagonal matrix is set to $|n + 1 - m|$ and all off-diagonal entries are set to $1$. This matrix has pairs of very closely spaced eigenvalues, see~\cite[p.\,309]{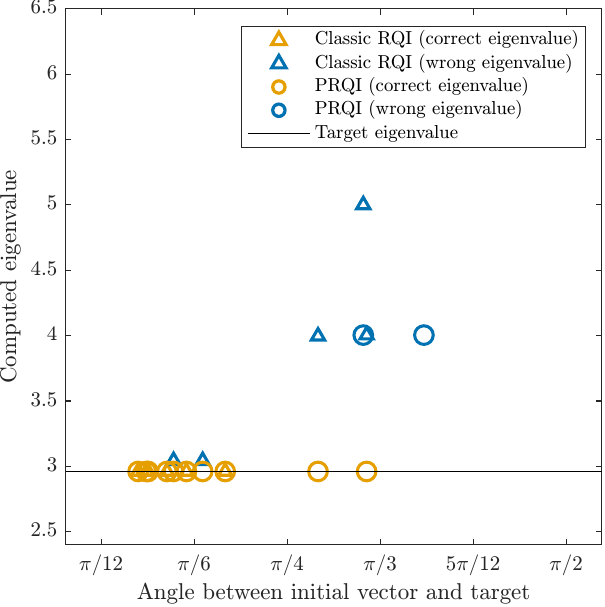}.
    \item The Laplace matrix, which arises when discretising the Laplacian in 2D using a 5-point stencil with the finite difference method. Let $T$ be a $m \times m$ tridiagonal matrix with $4$ as its diagonal entries and $-1$ on its off-diagonals. The Laplace matrix is a $m^2 \times m^2$ block tridiagonal matrix with $T$ as its diagonal blocks and $-I$ as its off-diagonal blocks, where $I$ is the $m \times m$ identity matrix.        
    \item Symmetric random matrices with a random (symmetric) sparsity pattern and non-zero entries that are normally distributed with mean $0$ and variance~$1$.
\end{enumerate}
\begin{figure}
  \centering
  \begin{subfigure}[b]{0.45\textwidth}
    \centering
    \includegraphics[width=\textwidth]{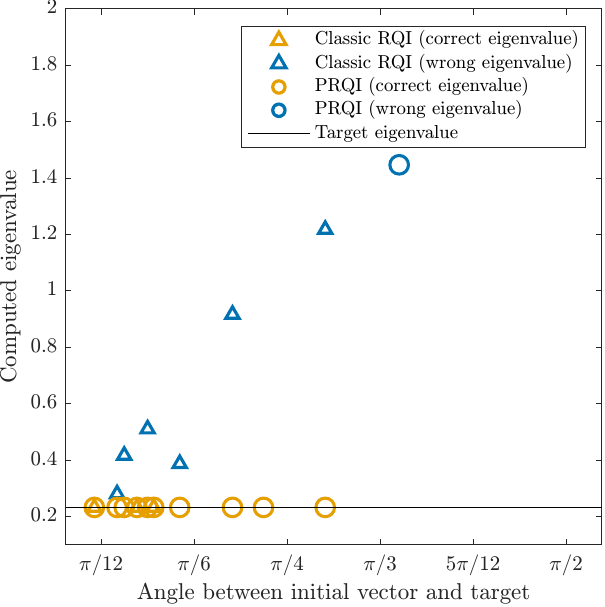}
    \caption{{\small $[1,2,1]$ matrix}}%
    \label{fig:conv_121}
  \end{subfigure}
  \hfill
  \begin{subfigure}[b]{0.45\textwidth}
    \centering
    \includegraphics[width=\textwidth]{wilkinson.pdf}
    \caption{{\small $W_{21}^+$ Wilkinson matrix}}%
    \label{fig:conv_wilkinson}
  \end{subfigure}
  \begin{subfigure}[b]{0.45\textwidth}
    \centering
    \includegraphics[width=\textwidth]{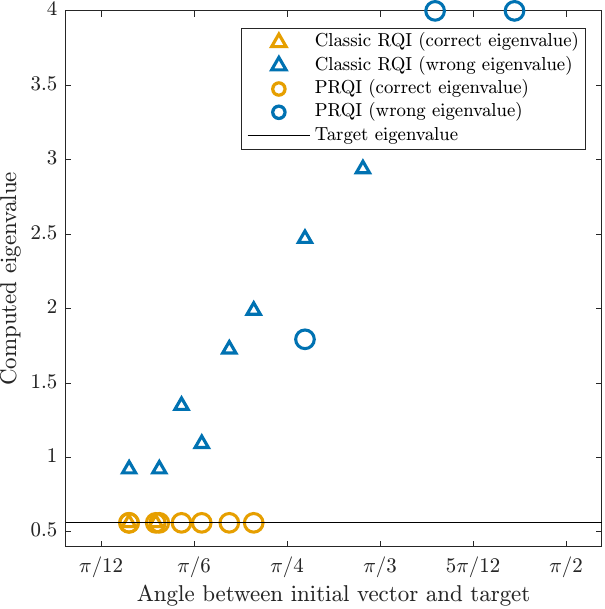}
    \caption{{\small Laplace matrix}}%
    \label{fig:conv_laplace}
  \end{subfigure}
  \hfill
  \begin{subfigure}[b]{0.45\textwidth}
    \centering
    \includegraphics[width=\textwidth]{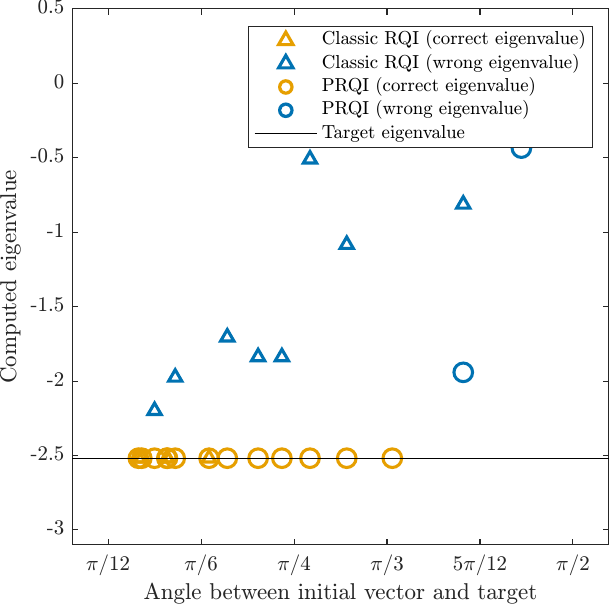}
    \caption{{\small Random matrix}}%
    \label{fig:conv_randsym}
  \end{subfigure}
  \caption{\label{fig:rqi_angle_conv_comparison} Plot of the angle between the initial vector and the target eigenvector against the computed eigenvalue for the four test matrices in Example \ref{ex:comparison2}. The target eigenvalue is depicted by the solid black line.
  While classic RQI sometimes fails even for very small angles (i.e., very accurate initial vectors), PRQI consistently produces the correct result as soon as the angle is sufficiently small.}%
\end{figure}
The plots in Figure~\ref{fig:rqi_angle_conv_comparison} show how the convergence of the classic RQI and PRQI depends on the angle between the initial vector and the target eigenvector. To generate the initial vectors, we first compute the full set of eigenvectors $\{\vec{v}_1, \dotsc, \vec{v}_n\}$ using built-in functions in MATLAB. We then draw $n$ random normally distributed real numbers $\alpha_j \sim \mathcal{N}(0,1)$, $j = 1,\dotsc,n$, and set $\vec{x}^{(0)} = \sum_{j=1}^n \alpha_j \vec{v}_j$. We randomly select one index $j^\ast$ to be the target index and throughout the experiment, we increase $\alpha_{j^\ast}$ so that the contribution of $\vec{x}^{(0)}$ in the direction of the target eigenvector $v_{j^\ast}$ becomes increasingly bigger (i.e., the angle becomes increasingly smaller).

We observe that PRQI seems to converge to the desired eigenpair once the angle between the initial vector and the target is sufficiently small. Classic RQI, on the other hand, still often converges to the wrong eigenpair for small initial error angles. In other words, our modification of classic RQI does indeed yield a method that makes more use of the initial vector instead of the initial shift.

{
For the $[1,2,1]$ test matrix we repeated the experiment with a total of $10^5$ randomly sampled initial vectors and counted the number of successful runs (i.e., the number of initial vectors that led to convergence to the target eigenpair). Here, we used the variant of PRQI that uses the squared residual norm for the magnitude of the imaginary shift $\gamma^{(k)}$. In Table~\ref{table:rqi_comparison_results} we report the portion of successful runs for different ranges of initial error angles. We also report the fraction of runs where~\eqref{eq:eigval:order} is satisfied for the initial Rayleigh quotient shift $\mu^{(0)}$, as well as the mean of the magnitude of the initial imaginary shift for PRQI.}

\begin{table}[htbp]
{
\centering
\begin{tabular}{crrrr}
\toprule
Initial error angle &  \multicolumn{2}{c}{Portion of successful runs} & \eqref{eq:eigval:order} satisfied & Mean $\gamma^{(0)}$ \\
\cmidrule(lr){2-3}
& Classic RQI & PRQI \\
\midrule
80$^{\circ}$ -- 90$^{\circ}$ & 0 \%     & 6.05 \%  & 0 \%     & 1.35 \\
70$^{\circ}$ -- 80$^{\circ}$ & 0 \%     & 31.16 \% & 0 \%     & 1.31 \\
60$^{\circ}$ -- 70$^{\circ}$ & 0.57 \%  & 92.45 \% & 0 \%     & 1.20 \\
50$^{\circ}$ -- 60$^{\circ}$ & 10.85 \% & 100 \%   & 0 \%     & 1.03 \\
40$^{\circ}$ -- 50$^{\circ}$ & 64.94 \% & 100 \%   & 0.01 \%  & 0.78 \\
30$^{\circ}$ -- 40$^{\circ}$ & 95.15 \% & 100 \%   & 21.87 \% & 0.47 \\
0$^{\circ}$ -- 30$^{\circ}$  & 100 \%   & 100 \%   & 99.30 \% & 0.26 \\
\bottomrule
\end{tabular}
\caption{{Fraction of $10^5$ random initial vectors that led to convergence to the target eigenpair of the Laplace matrix in Example 2 for different ranges of initial error angles}.}
\label{table:rqi_comparison_results}
}
\end{table}
\end{example}

\begin{example}\label{ex:sturm}
For this example we consider the following Sturm--Liouville problem
\begin{equation}
\label{eq:schroedinger}
\begin{aligned}
    -u^{\prime \prime}(x) + \left( \sin x - \frac{40}{1 + x^2} \right)u(x) &= \lambda u(x), \qquad x \in (0,\infty)\,, \\
     u(0) &= 0\,,
\end{aligned}
\end{equation}
which exhibits a band-gap structure and also has ``trapped'' discrete eigenvalues in the gaps.
These types of spectral problems arise, e.g., in the modelling of photonic crystal fibres~\cite{Kuchment2001}, and the specific problem above has been studied previously in~\cite{aceto2006, marletta2010neumann, Marletta2012}. Our goal is to compute some of the eigenvalues in the gaps, which will be complicated by the following two facts: (1) the discrete eigenvalues in the gaps accumulate at the lower end of the bands; and (2)  na\"ive numerical discretisation of~\eqref{eq:schroedinger}  
generates additional, unwanted (so-called \emph{spurious}) eigenvalues in the spectrum within close distance to and thus indistinguishable from the target eigenvalues. Nevertheless, since the eigenvectors 
have known distinct structures, PRQI is well suited to computing eigenvalues lying in one of the lower spectral gaps while avoiding convergence to a spurious eigenvalue. Since our main goal is to show that PRQI converges to one of the target eigenvalues using only coarse \emph{a priori} information about the shape of the target eigenvector, we use a simple discretisation.
To compute eigenvalues in higher spectral gaps, a discretisation with higher accuracy is required, 
since the accuracy of the eigenvalues of the discretised problem quickly deteriorates when going to higher parts of the spectrum~\cite{aceto2006}.

Let us first describe the problem and the numerical discretisation more carefully. As mentioned above, the operator in~\eqref{eq:schroedinger} has band-gap structure with very closely spaced, discrete eigenvalues in the gaps accumulating at the lower ends of the bands (see~\cite{Marletta2012} or~\cite[Chap.~4-5]{Brown2013} for details). The first two bands are given by
\begin{equation}
    \label{eq:bands}
    J_1 = [-0.37849, -0.34767],\ J_2 = [0.59480, 0.91806]\,, 
\end{equation}
and our goal is to compute some of the discrete eigenvalues in the gap between those bands. We will exploit the fact that the qualitative behaviour of the desired eigenfunctions is very different from that of the undesired ones. As depicted in Figure~\ref{fig:eigvec plots}, the target eigenfunctions decay exponentially away from the left boundary while the eigenfunctions corresponding to spurious eigenvalues decay exponentially away from the right boundary, see~\cite{Marletta2012}.
\begin{figure}[t]
\centering
\begin{subfigure}[b]{0.45\textwidth}
    \centering
    \includegraphics[width=\textwidth]{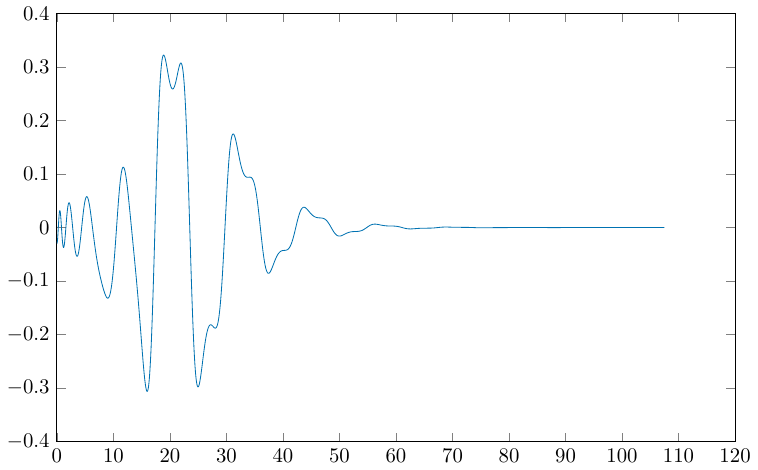}
    \caption{Target eigenfunction}
\end{subfigure}%
\begin{subfigure}[b]{0.45\textwidth}
    \centering
    \includegraphics[width=\textwidth]{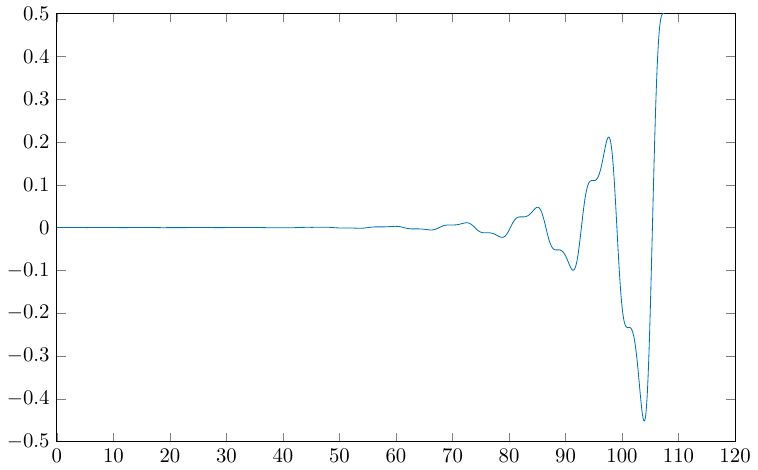}
    \caption{Eigenfunction of spurious eigenvalue}
\end{subfigure}%
\caption{Plots of one of the eigenfunctions corresponding to a target eigenvalue (left) and of the eigenfunction corresponding to an unwanted, spurious eigenvalue (right) for the Sturm-Liouville problem in \eqref{eq:schroedinger}.}\label{fig:eigvec plots}
\end{figure}

To solve~\eqref{eq:schroedinger} numerically, we discretise the equation using the finite element method, see, e.g., ~\cite{braess}.
First, the problem is truncated to the interval $[0,X]$, for $X > 0$ sufficiently large, where at the artificial boundary we impose a Neumann boundary condition $u^\prime(X) = 0$. The weak formulation of the resulting problem, obtained by multiplying with a test function $\varphi \in V \coloneqq \{ f \in H^1([0,X]) \mid f(0) = 0 \}$ and integrating by parts reads: Find $u \in V$ such that with $q(x) \coloneqq \sin x - {40}/{(1 + x^2)}$ we have
\begin{equation}
\label{eq:weakform}
    \int_0^X u^\prime \varphi^\prime + \int_0^X q u \varphi 
    = \lambda \int_0^X u \varphi\,,\qquad \text{for all } \varphi \in V.
\end{equation}

Let now $V_h \subset V$ be the space of piecewise linear finite elements with respect to the
uniform mesh $\{0 = x_1 < \dotsm < x_n = 1\}$ with (Lagrange) basis $\{\varphi_1, \dotsc, \varphi_n\}$ such that $\varphi_j(x_i) = \delta_{ij}$ for all $i,j \le n$. For the mesh width $h = x_{k+1} - x_{k}$, for all $k$, 
we used $h = 0.01$.  
Replacing $u$ in~\eqref{eq:weakform} by a function $u_h \in V_h$ and expanding in the basis such that $u_h = \sum_{j=1}^n v_j \varphi_j$
leads to an algebraic eigenvalue equation of the form
\begin{equation}
\label{eq:fe_eigval_problem}
    (A + B)\vec{v} = \lambda M \vec{v}\,,
\end{equation}
with $\vec{v} = {(v_1, \dotsc, v_n)}^T$ and the entries of the matrices $A,B,M \in \mathbb{R}^{n \times n}$ are given by
\begin{equation*}
    A_{ij} = \int_0^X \varphi_i^\prime \varphi_j^\prime\,,\qquad
    B_{ij} = \int_0^X q\varphi_i \varphi_j\,,\qquad
    M_{ij} = \int_0^X \varphi_i^\prime \varphi_j^\prime\,.
\end{equation*}

In Figure~\ref{fig:closeup_spectrum} we plot a close-up of the spectrum of the original and the perturbed problem respectively around the target eigenvalues (cf.\ Figure~\ref{fig:perturbed_spectrum}). The marked eigenvalue is the spurious one; the eigenvalues left and right of it are two of the target eigenvalues. The perturbed spectrum corresponds to the eigenproblem 
\[
  (A+B + i(M - M\vec{x}_0 {(M{\vec{x}_0})}^T)) \vec{v} = \lambda M \vec{v}\,,
\]
where $\vec{x}_0 \in \bbR^n$ is the initial vector used in PRQI, described below.
\begin{figure}
    \centering
    \includegraphics[width=0.9\textwidth]{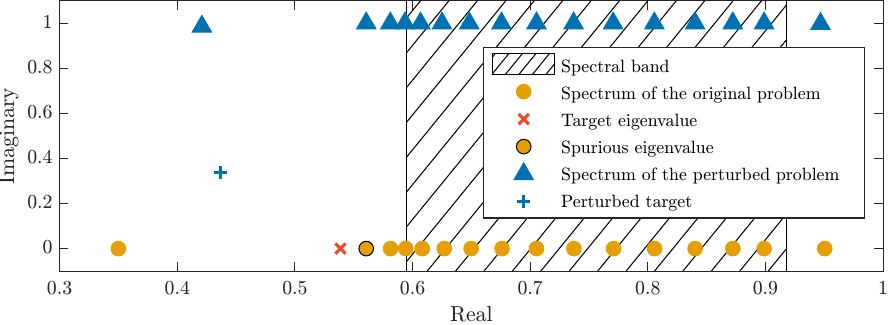}
    \caption{Part of the spectrum of the original and the perturbed Sturm--Liouville eigenvalue problem \eqref{eq:schroedinger}. The target eigenvalue in the original spectrum (red cross) is very close to the unwanted eigenvalues in the band (hatched region) and the 
    spurious eigenvalue (bordered circle). The perturbed eigenvalues (blue triangles) are raised into the complex plane and only the target eigenvalue (blue cross) stays near the real line.}%
    \label{fig:closeup_spectrum}
\end{figure}
Recall that the eigenfunctions corresponding to target eigenvalues are exponentially decaying, and, being eigenfunctions of a Sturm--Liouville operator, are also oscillating. As such, we generate the initial vectors $\vec{x}_0$ discretising and normalising (w.r.t.\ the $\|\cdot\|_M$ norm) functions $f : [0,X] \rightarrow \mathbb{R}$ that are constructed as follows:
\begin{enumerate}
    \item Choose a cutoff value $R \in (0,X)$.
    \item For $x > R$ set $f(x) = 0$ (this cutoff models the exponential decay).
    \item For $x \le R$ define $f$ to be piecewise constant and oscillating, taking values~$\pm 1$.
\end{enumerate}
Due to the Dirichlet boundary condition at $x = 0$ we further set $f(x) = 0$ on $[0,x_0]$ for some small $x_0 > 0$. Throughout our experiments we used $x_0 = 0.1$. Figure~\ref{fig:initial_vec} shows an initial vector generated by this approach for a cutoff value of $R = 35$ (which is approximately a third of the total length of the interval). Below we will vary the number of oscillations $n_{\text{osc}}$ for fixed $R$; since the oscillatory part of the function $f$ used to generated this vector consisted of three full oscillations before setting it to zero on $[0,x_0]$, we say that this vector corresponds to $n_{\text{osc}} = 3$.

Figures~\ref{fig:prqi_result} and~\ref{fig:classic_rqi_result} are the eigenvectors computed by PRQI and classic RQI, respectively using this initial vector. PRQI converged to the target eigenvalue $\lambda_{\text{PRQI}} = 0.53874$ (which is in the gap between the bands $J_1$  and $J_2$ in \eqref{eq:bands}), 
while classic RQI converged to an eigenvalue far away ($\lambda_{\text{RQI}} = 26.8648$). 
For classic RQI to converge to a value in the first spectral gap it was necessary to also set the initial shift appropriately.

\begin{figure}[t]
\centering
\begin{subfigure}[b]{0.33\textwidth}
    \centering
    \includegraphics[width=\textwidth]{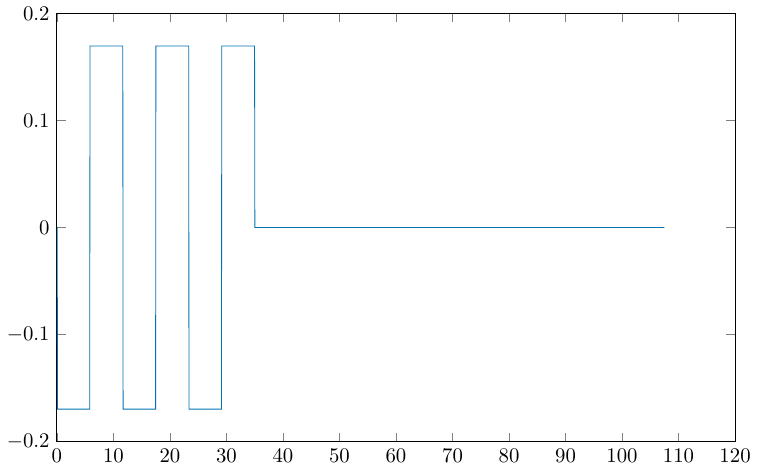}
    \caption{Initial vector}\label{fig:initial_vec}
\end{subfigure}%
\begin{subfigure}[b]{0.33\textwidth}
    \centering
    \includegraphics[width=\textwidth]{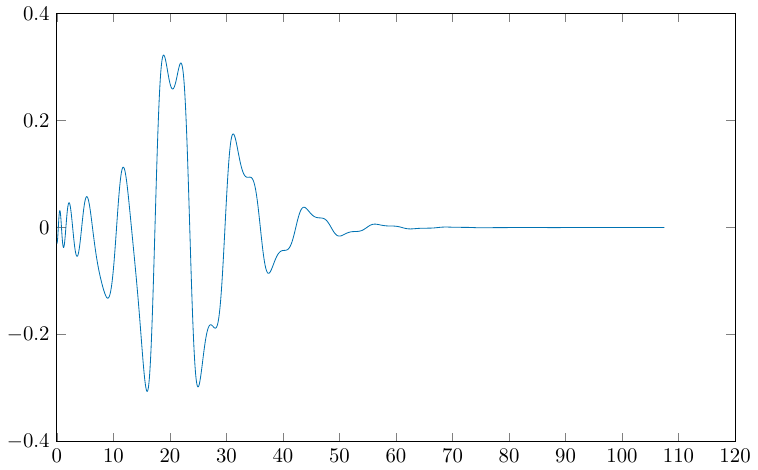}
    \caption{PRQI result}\label{fig:prqi_result}
\end{subfigure}%
\begin{subfigure}[b]{0.33\textwidth}
    \centering
    \includegraphics[width=\textwidth]{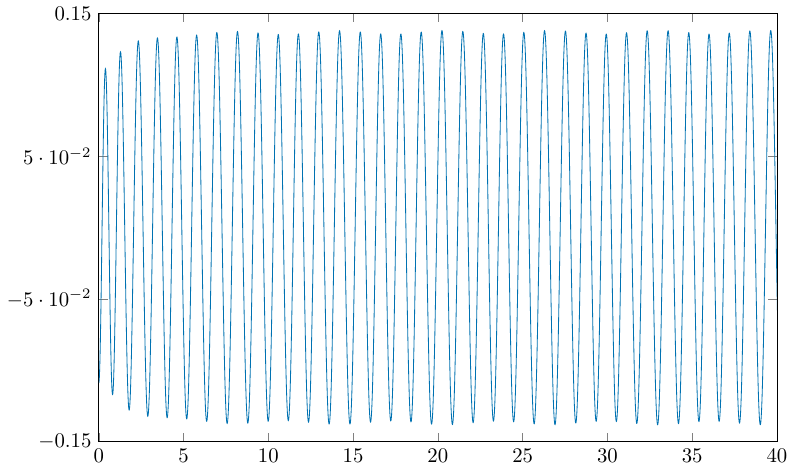}
    \caption{Classic RQI result}\label{fig:classic_rqi_result}
\end{subfigure}%
\caption{\label{fig:eigvec_comparison} The left figure shows an initial vector used to compute an eigenpair of \eqref{eq:schroedinger} in the second spectral gap with PRQI. The middle figure is the eigenvector that PRQI converged to. The right figure shows the eigenvector computed by classic RQI (we cropped the interval at $x = 40$ for better visualisation).}
\end{figure}

To target other eigenvalues in the first spectral gap, we can vary $R$ and/or $n_{\text{osc}}$. While PRQI successfully avoided convergence to a spurious eigenvalue in all our experiments, it sometimes converged to an eigenvalue in one of the spectral bands. Since these eigenvalues correspond to the essential spectrum of the original problem, the computed eigenvectors are not localised anywhere, i.e., they oscillate over the whole interval $[0,X]$. This allows us to implement a modified PRQI that early on detects convergence to an unwanted eigenvalue. Let $S \in [0,X]$ and define
\begin{equation*}
    \eta(\vec{x}^{(k)})= \frac{\|\vec{x}^{(k)} \mathds{1}_{\{x > S\}}\|}{\|\vec{x}^{(k)}\|} \in [0,1]\,,
\end{equation*}
where $\vec{x}^{(k)}$ is the current vector iterate and $\vec{x}^{(k)} \mathds{1}_{\{x > S\}} \in \bbC^n$ is a vector constructed from $\vec{x}^{(k)}$ by setting all entries at indices corresponding to function values at $x \le S$ to zero. For $S$ sufficiently large, vectors corresponding to target eigenvalues, being exponentially decaying, will satisfy $\eta(\vec{x}^{(k)}) \ll 1$. Thus, to detect convergence to an unwanted eigenvector, during each iteration of PRQI we can check if $\eta(\vec{x}^{(k)}) > \eta^\ast$ for some threshold value $\eta^\ast$ and if yes, abort the current execution (a similar criterion is proposed in~\cite{grebenkov2013,ovall2023}). We used $S = 80$ and determined $\eta^\ast$ by computing $\eta$ for a rapidly oscillating sine function. This gave $\eta \approx 0.5$, so we choose $\eta^\ast = 0.4$. 

Table~\ref{tab:results} summarises the results using this approach for a few exemplar values of $n_{\text{osc}}$ and $R$, the number of oscillations and cutoff values of the initial vectors that we used. The column ``Index''  contains the index of the eigenvalue computed by the respective method within the spectrum of the discretised equation~\eqref{eq:fe_eigval_problem} counted from the smallest eigenvalue. {In this case, we used the squared residual norm for the imaginary shift in PRQI, i.e., $\gamma^{(k)} = \|\vec{r}^{(k)}\|^2$ (which corresponds to $q=1$ in Theorem~\ref{thm:prqi:conv}).}

\begin{table}
{
  \centering
  \begin{tabular}{llcccccc}\toprule
    && \multicolumn{3}{c}{PRQI} & \multicolumn{3}{c}{Classic RQI}
    \\\cmidrule(lr){3-5}\cmidrule(lr){6-8}
    $n_{\text{osc}}$ & $R$ & Eigenvalue & Index & \# Its. & Eigenvalue & Index & \# Its. \\\midrule
    1.5 & 35 & -0.22706 &  22 &  7 & 25.06396 & 174 &  8 \\
    2 & 35 & -0.22706 &  22 & 10 & 36.44008 & 209 &  6 \\
    2.5 & 35 & -0.41034 &  10 &  8 & 43.49608 & 228 &  6 \\
    3 & 55 & -0.22706 &  22 &  9 & 34.34056 & 203 &  7 \\
  3.5 & 55 &  0.34988 &  23 &  9 & 46.25176 & 235 &  4 \\
    4 & 55 &  0.34988 &  23 &  8 & 45.06046 & 232 &  7 \\
    4.5 & 55 &  0.53874 &  24 &  8 & 59.01389 &  \hspace{-3mm}$>$250 &  5 \\
    5 & 55 &  0.58134 &  26 &  8 & 68.37970 &  \hspace{-3mm}$>$250 &  5 \\
    \bottomrule
  \end{tabular}
  \caption{\label{tab:results} Results of applying PRQI {(using $\gamma^{(k)} = \|\vec{r}^{(k)}\|^2$ in the imaginary shift)} and classic RQI to the Sturm--Liouville problem for different initial vectors parametrised by the number of oscillations $n_{\text{osc}}$ and the cutoff value $R$ . PRQI converged to one of the target eigenvalues for many different combinations of $n_{\text{osc}}$ and $R$ while classic RQI always converged to an eigenvalue far away. Furthermore, PRQI successfully skips the spurious eigenvalue $\lambda_{\text{sp}} \approx 0.56$ with Index 25, and it required on average only about two iteration more than classic RQI. 
  }%
}
\end{table}

We observe that PRQI converges to a target eigenvalue for many different initial vectors and on average requires only one iteration more than classic RQI, while
classic RQI consistently converges to an eigenvalue very far away.

The results obtained for classic RQI can be explained by computing the Rayleigh quotient for the initial vectors used. For instance, the Rayleigh quotient for the initial vector $\vec{x}^{(0)}$ corresponding to the values in the first row is given by $\raq(\vec{x}^{(0)}) \approx 24.7$. In practice, however, one would rarely use classic RQI directly for this problem and certainly not with such a bad shift. The endpoints of the spectral gaps can be computed using Floquet theory~\cite{Brown2013}, and one would therefore instead use some value inside the spectral gap in which the eigenvalue is sought as the initial shift in classic RQI. Usually a few iterations with a fixed shift (i.e., with the shifted inverse iteration) are carried out before switching to the Rayleigh quotient shift. However, as demonstrated in Example~\ref{ex:comparison3x3}, classic RQI is strongly dependent on the value of the initial shift and so this approach is not suitable for this type of problem. 

To demonstrate this, we carried out some further tests: Without any further information, one reasonable choice for the initial shift would be the midpoint of the first spectral gap $\mu^{(0)} \approx 0.1236$. However, we found that classic RQI (both when preceded by inverse iteration and when used stand-alone) usually converges to an eigenvalue close to the shift, in this case either to the eigenvalue $0.349875$ or $-0.227061$. To target the eigenvalues near the right end of the gap, we also used $\mu^{(0)} = 0.5$ as the initial shift. This, however, caused convergence to the unwanted spurious eigenvalue for some combinations of $R$ and $n_{\text{osc}}$ (e.g., $R=35$, $n_{\text{osc}} = 4$).
\end{example}

\section{Conclusion}
In this paper we have presented a modification of the classic Rayleigh Quotient Iteration with the aim to produce a method that overcomes some of the problems of RQI. The method, which we called Projected RQI (PRQI), was first introduced by perturbing the matrix using the projection onto the orthogonal complement of the initial vector and using this matrix during the iteration in RQI. We then showed that this approach can be simplified: it suffices to add an imaginary shift to the Rayleigh quotient shift in RQI. 

We then proved that PRQI is cubically convergent and we presented numerical examples that demonstrate that the convergence behaviour of PRQI is indeed more robust than that of classic RQI. In particular, PRQI seems to make more use of the information in the initial vector instead of the initial shift. We then showed how this behaviour can be used to compute certain eigenvalues in a band-gap spectrum of a Sturm--Liouville operator. It was demonstrated how a small amount of \emph{a priori} knowledge about the shape of an eigenvector can be used to construct an initial vector that precludes the convergence of PRQI to unwanted eigenpairs.

Possible avenues for future work would be to consider more examples of the Sturm--Liouville type as in Example~\ref{ex:sturm}. In~\cite{szyld} classic RQI and Inverse Iteration are combined to confine the eigenvalue iterates to a given interval. In the Sturm--Liouville example, the endpoints of the spectral gaps can be computed using Floquet theory~\cite{Brown2013}, so possibly the idea in~\cite{szyld} can be extended to include PRQI: the combination of classic RQI and Inverse Iteration would ensure that the resulting eigenvalue lies within the wanted gap while PRQI avoids convergence to a spurious eigenvalue. Combining classic RQI and PRQI could also be fruitful in other cases as a means to improve convergence speed by switching from PRQI to classic RQI (i.e., setting $\gamma = 0$) once the error residual is sufficiently small.\medskip

\textbf{Acknowledgments.} Many thanks go to Chris Hart for the excellent foundations he laid for this paper more than 12 years ago and to Marco Marletta for the many useful suggestions during that period. This work is supported by Deutsche Forschungsgemeinschaft (German Research
Foundation) under Germany's Excellence Strategy EXC 2181/1 - 390900948 (the Heidelberg STRUCTURES Excellence Cluster).

\small
\bibliographystyle{abbrv}
\bibliography{bibliography}

\end{document}